\documentclass[11pt,a4paper,reqno]{amsart}
\usepackage{amsmath,amssymb,amsfonts,amsthm}
\usepackage[francais,english]{babel}
\usepackage{a4wide}
\usepackage{color}
\usepackage{pdfsync}
\usepackage{graphicx,subfigure}
\usepackage{hyperref}

\textheight 24cm
\textwidth 17cm 
\topmargin -0.5cm
\oddsidemargin      -0.5cm      
\evensidemargin  -0.5cm

\theoremstyle{plain}

\newtheorem{theorem}{Theorem}[section]
\newtheorem{lemma}[theorem]{Lemma}
\newtheorem{proposition}[theorem]{Proposition}
\newtheorem{corollary}[theorem]{Corollary}

\newcommand{\vertiii}[1]{{\left\vert\kern-0.25ex\left\vert\kern-0.25ex\left\vert #1 \right\vert\kern-0.25ex\right\vert\kern-0.25ex\right\vert}}


%

\theoremstyle{remark}

\newtheorem*{remark}{Remark}

\numberwithin{equation}{section}
\makeatletter

\@addtoreset{equation}{section}
\makeatother



\newcommand{\Z}{{\mathbb Z}}

\newcommand{\R}{{\mathbb R}}

\def\curl{{\rm curl }\,}
\def\div{{\rm div }\,}
\def\d{\, {\rm d }}

\title[The vortex method]{The vortex method for 2D ideal flows \\ in the exterior of a disk}
\author[D. Ars\'enio, E. Dormy \& C. Lacave]{Diogo Ars\'enio, Emmanuel Dormy  \& Christophe Lacave}

\address[D. Ars\'enio]{CNRS, Institut de Mathématiques de Jussieu-Paris Rive Gauche, UMR7586, Univ Paris Diderot, Sorbonne Paris Cité, Sorbonne Universités, UPMC Univ Paris 06, F-75013 Paris, France.} 
\email{diogo.arsenio@imj-prg.fr}

\address[E. Dormy]{CNRS \& MAG (ENS/IPGP), D\'epartement de Physique, 24 rue Lhomond, 75005 Paris, France.}
\email{dormy@phys.ens.fr}

\address[C. Lacave]{Univ Paris Diderot, Sorbonne Paris Cité, Institut de Mathématiques de Jussieu-Paris Rive Gauche, UMR 7586, CNRS, Sorbonne Universités, UPMC Univ Paris 06, F-75013, Paris, France.}
\email{christophe.lacave@imj-prg.fr}
\date{October 3, 2014.}

\begin{document}
\maketitle
\begin{abstract}
The vortex method is a common numerical and theoretical approach used to implement the motion of an ideal flow, in which the vorticity is approximated by a sum of point vortices, so that the Euler equations read as a system of ordinary differential equations. Such a method is well justified in the full plane, thanks to the explicit representation formulas of Biot and Savart. In an exterior domain, we also replace the impermeable boundary by a collection of point vortices generating the circulation around the obstacle. The density of these point vortices is chosen in order that the flow remains tangent at midpoints between adjacent vortices. In this work, we provide a rigorous justification for this method in exterior domains. One of the main mathematical difficulties being that the Biot-Savart kernel defines a singular integral operator when restricted to a curve. For simplicity and clarity, we only treat the case of the unit disk in the plane approximated by a uniformly distributed mesh of point vortices. The complete and general version of our work is available in \cite{ADL}.
\\
\\ Keywords: Euler equations, elliptic problem in exterior domains, Hilbert transform and discrete Hilbert transform.
\\ MSC: 35Q31, 35Q35, 76B47.
\end{abstract}
\selectlanguage{francais}
\begin{abstract}
La m\'ethode des vortex est une approche th\'eorique et num\'erique couramment utilis\'ee afin d'impl\'ementer le mouvement d'un fluide parfait, dans laquelle le tourbillon est approch\'e par une somme de points vortex, de sorte que les \'equations d'Euler se r\'e\'ecrivent comme un syst\`eme d'\'equations diff\'erentielles ordinaires. Une telle m\'ethode est rigoureusement justifi\'ee dans le plan complet, gr\^ace aux formules explicites de Biot et Savart. Dans un domaine ext\'erieur, nous rempla\c cons \'egalement le bord imperm\'eable par une collection de points vortex, g\'en\'erant une circulation autour de l'obstacle. La densit\'e de ces points est choisie de sorte que le flot demeure tangent au bord sur certains points interm\'ediaires aux paires de tourbillons adjacents sur le bord. Dans ce travail, nous proposons une justification rigoureuse de cette m\'ethode dans des domaines ext\'erieurs. L'une des principales difficult\'es math\'ematiques \'etant que le noyau de Biot-Savart d\'efinit un op\'erateur int\'egral singulier lorsqu'il est restreint \`a une courbe. Par souci de simplicit\'e et de clart\'e, nous traitons seulement le cas du disque unit\'e dans le plan, approch\'e par un maillage de points uniform\'ement r\'epartis. La version compl\`ete et g\'en\'erale de notre travail est disponible en \cite{ADL}.
\\
\\ Mots cl\'es: \'equations d'Euler, probl\`eme elliptique dans des domaines ext\'erieurs, transform\'ee de Hilbert et transform\'ee de Hilbert discr\`ete.
\end{abstract}
\selectlanguage{english}

\section{Introduction}

Numerical methods describing the evolution of a flow have many practical interests in engineering and applications. It is therefore important to justify that given methods provide good approximations of analytic solutions. The goal of this proceeding is to validate the vortex method in exterior domains  for the two-dimensional Euler equations.

\subsection{The Euler equations in exterior domains}

The motion of an incompressible ideal fluid filling a domain $\Omega \subset \R^2$ is governed by the Euler equations:
\begin{equation}\label{Euler}
	\left\{
	\begin{array}{lcl}
		\partial_{t} u + u\cdot \nabla u +\nabla p=0 & \text{in} &(0,\infty)\times \Omega, \\
		\div u  =0 & \text{in} &[0,\infty)\times \Omega, \\
		u \cdot n  = 0 & \text{on} &[0,\infty)\times \partial \Omega, \\
		u(0,\cdot)  = u_0 & \text{in} & \Omega, \\
	\end{array}
	\right.
\end{equation}
where $u=(u_{1}(t,x_{1},x_{2}),u_{2}(t,x_{1},x_{2}))$ is the velocity, $p=p(t,x_{1},x_{2})$ the pressure and $n$ the unit inward normal vector.
There is an impressive literature about the study of this system, first for physical motivations and second because it provides elegant mathematical problems at the boundaries of elliptic theory, dynamical systems, convex geometry and partial differential equations. The richness of these equations is due to the role of the vorticity:
\[
\omega(t,x):= \curl u(t,x) = \partial_{1} u_{2} - \partial_{2} u_{1}.
\]
Indeed, taking the curl of the momentum equation, we note that this quantity satisfies a transport equation:
\begin{equation}\label{Euler vort}
		\partial_{t} \omega + u\cdot \nabla \omega =0  \quad \text{in}\quad (0,\infty)\times \Omega.
\end{equation}
From this form, we deduce many conservation properties which allow to establish the wellposedness of the Euler equations in several different settings (standard references can be found in  \cite{GVL2,MajdaBertozzi}). Therefore, one of the key steps in the analysis of \eqref{Euler} consists in reconstructing the velocity $u$ from the vorticity $\omega$ by solving the following elliptic problem:
\begin{equation}\label{elliptic}
	\left\{
	\begin{array}{lcl}
		\div u  =0 & \text{in}& \Omega, \\
		\curl u  =\omega & \text{in}& \Omega, \\
		u \cdot n  = 0 & \text{on}&\partial\Omega, \\
		u \rightarrow 0 & \text{as}& x\rightarrow\infty, \\
	\end{array}
	\right.
\end{equation}
where $\omega\in C^{0,\alpha}_c\left(\Omega\right)$, for some $0<\alpha\leq 1$.

In the case of the full plane $\Omega=\R^2$, any solution of
\begin{equation}\label{system BS}
		\div u  =0  \text{ in } \mathbb{R}^2, \quad
		\curl u  =\omega \text{ in } \mathbb{R}^2, \quad
		u  \rightarrow 0  \text{ as } x\rightarrow\infty,
\end{equation}
satisfies
\begin{equation*}
	\Delta u = \nabla^\perp \omega \quad\text{in }\mathbb{R}^2,
\end{equation*}
which easily yields
\begin{equation*}
	u=K_{\mathbb{R}^2}[\omega]=\mathcal{F}^{-1}\frac{i\xi^\perp}{|\xi|^2}\mathcal{F}\omega.
\end{equation*}
Here, the superscript $\perp$ denotes the rotation by $\pi/2$, that is $(x_{1},x_{2})^\perp =(-x_{2},x_{1})$. It follows, employing standard results on Fourier multipliers, that $K_{\mathbb{R}^2}$ has bounded extensions from $L^p$ to $\dot W^{1,p}$, for any $1<p<\infty$. Furthermore, writing $\Phi(x)=-\frac 1{2\pi}\log|x|$ the fundamental solution of the Laplacian in $\mathbb{R}^2$, it holds that (see e.g. \cite{gilbarg})
\begin{equation}\label{BS R2}
	u=K_{\mathbb{R}^2}[\omega]=- \Phi*\left(\nabla^\perp\omega\right)=-\nabla^\perp \left(\Phi*\omega\right)=\frac 1{2\pi}\int_{\mathbb{R}^2} \frac{(x-y)^\perp}{|x-y|^2}\omega(y)dy
	\in C^1\left(\mathbb{R}^2\right).
\end{equation}
We refer to \cite[p. 249]{courant} for a justification of the $C^1$-regularity of $K_{\mathbb{R}^2}[\omega]$.

When $\Omega=\{ x\in \R^2,\ |x|>1\}$ is the exterior of the unit disk, there are an infinite number of solution of \eqref{elliptic}, because of the harmonic vector field:
\[
H(x) =\frac1{2\pi}\frac{x^\perp}{|x|^2},
\]
which verifies
\begin{equation*}
		\div H  =0  \text{ in } \Omega, \quad
		\curl H  =0 \text{ in } \Omega, \quad
		H\cdot n  =0 \text{ on } \partial\Omega, \quad
		H  \rightarrow 0  \text{ as } x\rightarrow\infty.
\end{equation*}
Thus, in order to reconstruct uniquely the velocity in terms of the vorticity, the standard idea consists in prescribing the circulation:
\[
\oint_{\partial\Omega} u \cdot \tau \, ds = \gamma,
\]
where $\gamma\in \R$ and $\tau:= n^\perp$ is the tangent vector to $\partial\Omega$. This constraint is natural because Kelvin's theorem implies then that the circulation of $u$ around an obstacle is a conserved quantity for the Euler equations. With this additional condition, it holds now true that there exists a unique solution $u$ of
\begin{equation}\label{elliptic2}
	\left\{
	\begin{array}{lcl}
		\div u  =0 & \text{in}& \Omega, \\
		\curl u  =\omega & \text{in}& \Omega, \\
		u \cdot n = 0 & \text{on}& \partial\Omega, \\
		u  \rightarrow 0 &\text{as}& x\rightarrow\infty, \\
		\oint_{\partial\Omega} u \cdot \tau \d s  = \gamma, 
	\end{array}
	\right.
\end{equation}
where $\omega\in C^{0,\alpha}_c\left(\Omega\right)$, for some $0<\alpha\leq 1$, and $\gamma\in\mathbb{R}$ (see e.g. \cite[Prop. 2.1]{ILL}).

To solve this elliptic problem, we introduce the Green function with Dirichlet boundary condition $G_{\Omega}:\ \Omega\times\Omega\to \R$ as a function verifying:
\begin{equation*}
	\begin{aligned}
		G_{\Omega}(x,y) & =G_{\Omega}(y,x) &&\forall (x,y)\in \Omega^2, \\
		\Delta_{x} G_{\Omega}(x,y) & = \delta(x-y)\ &&\forall (x,y)\in \Omega^2, \\
		G_{\Omega}(x,y) & =0 &&\forall (x,y)\in \partial\Omega \times \Omega,
	\end{aligned}
\end{equation*}
where $\delta$ denotes the Dirac function centered at the origin. In the case of the exterior of the unit disk $D:= \overline{B(0,1)}$, we have an explicit formula:
\[
G_{\Omega}(x,y) = \frac1{2\pi} \ln \frac{|x-y|}{|x-y^*| |y|},
\]
with the notation $y^*=\frac{y}{|y|^2}$, for any $y\in\mathbb{R}^2\setminus\left\{0\right\}$. This expression allows us to write explicitly the solution of \eqref{elliptic2} (for all details, we refer e.g. to \cite{ILL}):
\begin{equation}\label{BS exterior}
\begin{split}
u(x) &= K_{\Omega}[\omega](x) + \alpha H(x) := \int_{\Omega} \nabla^\perp_{x} G_{\Omega}(x,y) \omega(y)\, dy +  \alpha H(x)\\
& = \frac1{2\pi} \int_{\Omega} \Big( \frac{x-y}{|x-y|^2} - \frac{x-y^*}{|x-y^*|^2} \Big)^{\perp}\omega(y)\, dy + \frac{\alpha}{2\pi} \frac{x^\perp}{|x|^2}\\
& = \frac1{2\pi} \int_{\mathbb{R}^2} \frac{\left(x-y\right)^\perp}{|x-y|^2} \left(\omega(y)-\frac{1}{|y|^4}\omega(y^*)\right)\, dy + \frac{\alpha}{2\pi} \frac{x^\perp}{|x|^2}
\in C^1\left(\overline\Omega\right),
\end{split}\end{equation}
where we have set
\begin{equation*}
\alpha = \gamma + \int_{\Omega}\omega(y) \, dy.
\end{equation*}
Note that the total mass of the vorticity is also a conserved quantity of incompressible ideal two-dimensional flows.

In conclusion, the Euler equations around the obstacle $D$ can be seen as the transport of the vorticity \eqref{Euler vort} by the velocity field $u$ defined by \eqref{BS exterior}. This property conveniently allows the use of various mathematical theories. It is therefore crucial to develop efficient and robust methods to rebuild the velocity field $u$ from the vorticity $\omega$ or an approximation of it. In particular, for the sake of applications, we are now going to focus on the theoretical and numerical approximation of \eqref{BS exterior}.

\subsection{The vortex method}

In the full plane $\mathbb{R}^2$, when the initial vorticity is close to be concentrated at $N$ given points $\left\{x_i^0\right\}_{i=1}^N\subset\mathbb{R}^2$, i.e. $\omega(t=0)\sim\sum_{i=1}^N \gamma_{i} \delta_{x_{i}^0}$ in some suitable sense, Marchioro and Pulvirenti \cite{MP91} have shown that the corresponding solution of the Euler equations in the full plane has a vorticity which remains close to a combination of Dirac masses $\omega(t)\sim\sum_{i=1}^N \gamma_{i} \delta_{x_{i}(t)}$ (in some suitable sense) where the centers $\{ x_{i} \}_{i=1}^N$ verify a system of ODE's, called the point vortex system:
\begin{equation}\label{point vortex 0}
\left\{ \begin{split}
\dot{x}_{i}(t) &= \frac1{2\pi} \sum_{j\neq i} \gamma_{j} \frac{(x_{i}(t)-x_{j}(t))^\perp}{|x_{i}(t)-x_{j}(t)|^2},\\
x_{i}(0)&=x_{i}^0.
\end{split}
\right.
\end{equation}
Here, the point vortex $\gamma_{i} \delta_{x_{i}(t)}$ moves under the velocity field produced by the other point vortices.

It turns out that this Lagrangian formulation is much easier to handle numerically than the Eulerian formulation \eqref{Euler vort}. Indeed, standard numerical methods on \eqref{Euler vort} generate an ``inherent numerical viscosity'' and some quantities which should be conserved instead decrease (see e.g. \cite{Hirsch,Toro}). Actually, smoothing the Biot-Savart kernel by mollifying $\frac{x^\perp}{|x|^2}$ in \eqref{point vortex 0} gives a more stable system, called the vortex-blob method (i.e. approximation of the vorticity by Dirac masses and regularization of the kernel). The stability and the convergence as $N\rightarrow\infty$ of the vortex-blob and point vortex methods have been extensively studied: in \cite{CCM88} for the vortex-blob method when the initial vorticity is bounded, in \cite{GHL90} for the point vortex method for smooth initial data and in \cite{Schochet} for both methods and for weak solutions as e.g. a vortex sheet (see also the textbook \cite{CK00}).

However, all these works use the explicit formula of the Biot-Savart law in the full plane \eqref{BS R2} where the flow $\frac{(x-x_{i})^\perp}{2\pi |x-x_{i}|^2}$ is identified with $K_{\R^2}[\delta_{x_{i}}]$. In an exterior domain, the Biot-Savart law is much more complicated. A possible approach could be to use the  explicit formula \eqref{BS exterior} in order to adapt the previous vortex methods. But such an approach would only be useful in the exterior of the disk. Indeed, if we consider that $\Omega$ is the exterior of a compact, simply connected subset of $\R^2$, we can implicitly adapt formula \eqref{BS exterior} thanks to conformal mappings, which has some theoretical interest, but this approach yields serious practical difficulties, for there are very few explicit Riemann mappings available.

Our alternative strategy consists in approximating the impermeable boundary of the exterior domain by a collection of point vortices $\sum_{i=1}^N \frac{\gamma_{i}^N(t)}N \delta_{x_{i}}$, where the vortex positions $\{x_{i}\}_{i=1}^N$ are fixed but the density of points $\{\gamma_{i}^N\}_{i=1}^N$ now evolves with time and is chosen in order that the resulting velocity field remains tangent at midpoints on the boundary between the $x_i$'s. Note that this approach appears sometimes in physics and engineering books (see e.g. \cite{BL93,GZ09}).

To this end, we introduce $u_{P}$ the solution of \eqref{system BS} in the full plane, which is explicitly given by \eqref{BS R2}:
\begin{equation}\label{uP}
	u_{P}:=K_{\R^2}[\omega]\in C^1\left(\mathbb{R}^2\right)\subset C^1\left(\overline\Omega\right),
\end{equation}
and the remainder velocity field $u_{R}$ defined by:
\begin{equation}\label{uR}
	u_R:=u-u_P\in C^0\left(\overline\Omega\right)\cap C^1\left(\Omega\right).
\end{equation}
As $\omega$ is compactly supported in $\Omega$ we get by the Stokes formula that $\oint_{\partial\Omega} u_P \cdot \tau \d s =\int_{B(0,1)}\curl u_P = \int_{B(0,1)} \omega=0$. Hence, it is readily seen that $u_R$ solves
\begin{equation}\label{eq uR}
	\left\{
	\begin{array}{lcl}
		\div u_R  =0 & \text{in}& \Omega, \\
		\curl u_R  =0 & \text{in}& \Omega, \\
		u_R \cdot n  = -u_P\cdot n & \text{on }&\partial\Omega, \\
		u_R  \rightarrow 0 & \text{as}& x\rightarrow\infty, \\
		\oint_{\partial\Omega} u_R \cdot \tau \d s  = \gamma. &&
	\end{array}
	\right.
\end{equation}
In particular, $u_R$ is harmonic in $\Omega$ and therefore it is smooth in $\Omega$, i.e. $u_R\in C^\infty\left(\Omega\right)$ (see \cite[Corollary 8.11]{gilbarg} or \cite{gray}).

The vortex method for the exterior domain $\Omega$ is essentially an approximation procedure of $u_R$ by point vortices on $\partial \Omega$.

Thus, let now $(x_{1}^N, x_{2}^N,\dots , x_{N}^N)$ be the positions of $N$ distinct point vortices on the boundary $\partial\Omega$. In the case of the disk, $
	\partial\Omega=\partial B(0,1)
	=
	\left\{\left(\cos\theta,\sin\theta\right)\in\mathbb{R}^2\ :\ \theta\in[0,2\pi)\right\}
$, we consider
\begin{equation}\label{xi}
0=\theta_{1}^N< \theta_{2}^N<\dots < \theta_{N}^N<2\pi \text{ such that }x_{i}^N=(\cos\theta_{i}^N,\sin \theta_{i}^N).
\end{equation}
We further introduce some intermediate points on the boundary, for each $i=1,\dots,N-1$:
\begin{equation}\label{tildexi}
\tilde\theta_{i}^N \in (\theta_{i}^N,\theta_{i+1}^N),\quad \tilde x_{i}^N:=(\cos\tilde\theta_{i}^N,\sin \tilde\theta_{i}^N).
\end{equation}
The method consists in approximating the solution $u_R$ to \eqref{eq uR} by a suitable flow
\begin{equation}\label{approx}
u_{\rm app}^N(x):=\frac1{2\pi} \sum_{j=1}^N \frac{\gamma_{j}^N}N \frac{( x - x_{j}^N)^\perp}{|x - x_{j}^N|^2} = K_{\R^2}\Big[ \sum_{j=1}^N \frac{\gamma_{j}^N}N \delta_{x_{j}^N} \Big],
\end{equation}
whose vorticity is precisely made of $N$ point vortices with density $\left\{\frac{\gamma_i^N}{N}\right\}_{i=1}^N$ on the boundary $\partial\Omega$.

It is to be emphasized that this approximation is consistent with and motivated by the physical idea that the circulation around the obstacle (here, the unit disk $B(0,1)$) is created by a collection of vortices on the boundary of the obstacle, i.e. a vortex sheet on the boundary.

However, it is a priori not obvious that such a flow $u_{\rm app}^N$ can be made a good approximation of $u_R$. Nevertheless, note that $u_{\rm app}^N$ already naturally satisfies
\begin{equation*}
	\left\{
	\begin{array}{lcl}
		\div u_{\rm app}^N  =0 & \text{in}& \Omega, \\
		\curl u_{\rm app}^N  =0 & \text{in}& \Omega, \\
		u_{\rm app}^N  \rightarrow 0 & \text{as}& x\rightarrow\infty.
	\end{array}
	\right.
\end{equation*}
Therefore, the key idea lies in enforcing that the boundary and circulation conditions be satisfied as $N\rightarrow\infty$ by setting $\gamma^N=(\gamma_{1}^N,\dots, \gamma_{N}^N)\in\mathbb{R}^N$ to be the solution of the following system of $N$ linear equations:
\begin{equation}\label{point vortex}
\begin{aligned}
& \frac1{2\pi}\sum_{j=1}^N \frac{\gamma_{j}^N}N \frac{(\tilde x_{i}^N - x_{j}^N)^\perp}{|\tilde x_{i}^N - x_{j}^N|^2}\cdot n(\tilde x_{i}^N)
= -[u_{P}\cdot n](\tilde x_{i}^N)
, \quad \text{for all }i=1,\dots, N-1,\\
& \sum_{i=1}^N \frac{\gamma_{i}^N}N = \gamma.
\end{aligned}
\end{equation}
It will be shown later on, under suitable hypotheses on the placement of point vortices, that the above system always has a solution $\gamma^N$. The fact that $u_{\rm app}^N$ is a good approximation of $u_R$ is precisely the content of our main theorem below (see Theorem \ref{main theo}). Clearly, it will then follow that $u$ is well approximated by $u_{\rm app}^N+K_{\mathbb{R}^2}[\omega]$, which will conclude the rigorous justification of the vortex method for the boundary of the exterior of a disk. Other more complicated non-smooth exterior domains are investigated in the full version of our work \cite{ADL}.

Notice that it is now also possible to combine the vortex method for the boundary of an exterior domain with the aforementioned vortex method in the whole plane in order to obtain a full and dynamic vortex method for an exterior domain. To this end, we consider an approximation of the initial vorticity $\omega_{0}$ by a combination of point vortices $\sum_{k=1}^{M} \alpha_{k} \delta_{y_{k}(0)}$. Then, the position $y_{k}(t)$ of each point vortex is let evolve under the influence of the vector field created by the remaining vortices $\sum_{p\neq k} \alpha_{p} \delta_{y_{p}(t)}$ (with possible regularization of the kernel) and the fixed vortices on the boundary $\sum_{i=1}^{N} \frac{\gamma_{i}^N(t)}N \delta_{x_{i}^N}$, where the variable vortex density $\gamma_{i}^N(t)$ is computed through \eqref{point vortex} where $u_{P}$ is replaced by $=K_{\R^2} [ \sum_{k=1}^{M} \alpha_{k} \delta_{y_{k}(t)} ]$.

Finally, it is to be emphasized that the main novelty of this method, when compared to the standard point vortex and vortex-blob methods, is the computation of $\gamma^{N}$ though \eqref{point vortex} allowing the construction of an approximate flow $K_{\R^2}[\omega]+ u_{\rm app}^N$ which only requires the use of the Biot-Savart kernel in the whole plane and does not resort to \eqref{BS exterior}.

\subsection{Main result}

For simplicity, we only consider in this work the stationary case where the points $\{x_{i}^N\}_{i=1}^N$ and $\{\tilde x_{i}^N\}_{i=1}^{N}$ are uniformly distributed on the unit circle:
\begin{equation}\label{mesh}
\theta_{i}^N = \frac{(i-1) 2\pi}{N} \quad \text{and} \quad \tilde\theta_{i}^N = \frac{(i-\frac 12) 2\pi}{N} \quad \forall i=1,\dots, N.
\end{equation}

Our main result states that the approximate flow $u_{\rm app}^N$, constructed through the procedure \eqref{point vortex}, is a good approximation of $u_{R}$:

\begin{theorem}\label{main theo}
	Let $\omega\in C^{0,\alpha}_c\left(\Omega\right)$ (with $0<\alpha\leq 1$) and $\gamma\in\mathbb{R}$ be given. For any $N\geq 2$, we consider the uniformly distributed mesh \eqref{mesh} and $u_{P}$ defined in \eqref{uP}.
	
	Then, the system \eqref{point vortex} admits a unique solution $\gamma^N\in \R^N$. Moreover, for any closed set $K\subset \Omega$  there exists a constant $C=C(K)$ independent of $N$ such that
	\begin{equation*}
		\| u_{R} - u_{\rm app}^N \|_{L^\infty(K)} \leq \frac{C}{N^2},
	\end{equation*}
	where $u_{\rm app}^N$ is given by \eqref{approx} in terms of $\gamma^N$ and $u_{R}$ is the continuous flow \eqref{uR}.
\end{theorem}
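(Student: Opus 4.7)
The plan is to express $u_R$ as the velocity induced by a smooth vortex sheet of density $\mu$ on $\partial\Omega$, to identify $\gamma^N$ as the shifted-grid discrete analogue of the samples $2\pi\mu(\theta_j^N)$, and to control $u_R-u_{\rm app}^N$ by a quadrature error plus a density error. Looking for $u_R$ in the form
$$
v_\mu(x)=\frac{1}{2\pi}\int_0^{2\pi}\frac{(x-y(\phi))^\perp}{|x-y(\phi)|^2}\,\mu(\phi)\,d\phi,\qquad \int_0^{2\pi}\mu\,d\phi=\gamma,
$$
with $y(\phi)=(\cos\phi,\sin\phi)$, the elementary identities $|y(\theta)-y(\phi)|^2=4\sin^2\frac{\theta-\phi}{2}$ and $(y(\theta)-y(\phi))^\perp\cdot n(\theta)=-\sin(\theta-\phi)$ turn the trace condition $v_\mu\cdot n=-u_P\cdot n$ on $\partial\Omega$ into the periodic Hilbert-transform equation
$$
\frac{1}{4\pi}\,\mathrm{PV}\!\int_0^{2\pi}\cot\frac{\theta-\phi}{2}\,\mu(\phi)\,d\phi=(u_P\cdot n)(\theta).
$$
Since $\omega$ has compact support in $\Omega$, $u_P$ is real-analytic on $\partial\Omega$ and $u_P\cdot n$ is zero mean there. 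The periodic Hilbert transform being an isomorphism of the zero-mean subspace of $C^\infty(\T)$, a unique $\mu\in C^\infty(\T)$ with $\int\mu=\gamma$ solves the equation.

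Applied at each $\tilde x_i^N$, the same geometric identity rewrites \eqref{point vortex} as a discrete Hilbert-transform equation whose coefficient matrix is $M_{ij}=\cot\frac{\pi(2(i-j)+1)}{2N}$. Since $\tilde\theta_i^N-\theta_j^N$ depends only on $i-j\pmod N$, $M$ is circulant and is diagonalized by the discrete Fourier transform. Constants lie in its kernel (the zero-frequency eigenvalue $\sum_m\cot(\pi(2m+1)/(2N))$ vanishes by the symmetry $m\leftrightarrow N-1-m$), and a direct computation shows that the remaining $N-1$ Fourier eigenvalues are non-zero and uniformly bounded away from zero. Adjoining the circulation equation then yields a uniquely solvable system together with a uniform-in-$N$ $\ell^\infty$ bound on the inverse on the zero-sum subspace. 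For the quantitative step, set $\hat\gamma_j^N:=2\pi\mu(\theta_j^N)$ and compare the discrete transform applied to $\hat\gamma^N$ with the continuous Hilbert transform evaluated at $\tilde\theta_i^N$: the midpoints $\tilde\theta_i^N$ stay at distance $\pi/N$ from every $\theta_j^N$, so the discrete sum is non-singular, and a Fourier-side estimate exploiting the smoothness of $\mu$ and the circulant structure yields an $O(N^{-k})$ consistency error for every $k$. Combined with the uniform invertibility above, this produces
$$
\|\gamma^N-\hat\gamma^N\|_{\ell^\infty}\leq C\,N^{-2}.
$$

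Finally, for $x$ in a closed $K\subset\Omega$ with $\mathrm{dist}(K,\partial\Omega)>0$, split
$$
u_R(x)-u_{\rm app}^N(x)=\Big(v_\mu(x)-\frac{1}{N}\sum_{j=1}^N\mu(\theta_j^N)\frac{(x-x_j^N)^\perp}{|x-x_j^N|^2}\Big)+\frac{1}{2\pi N}\sum_{j=1}^N(\hat\gamma_j^N-\gamma_j^N)\frac{(x-x_j^N)^\perp}{|x-x_j^N|^2}.
$$
Because the integrand $\phi\mapsto\frac{(x-y(\phi))^\perp}{|x-y(\phi)|^2}\mu(\phi)$ extends holomorphically to a fixed complex strip uniformly in $x\in K$, the midpoint quadrature error in the first bracket decays faster than any power of $N$ and is in particular $O(N^{-2})$. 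The second term is bounded by $\|\gamma^N-\hat\gamma^N\|_{\ell^\infty}$ times a kernel that is uniformly bounded on $K\times\partial\Omega$, hence is $O(N^{-2})$ by the previous paragraph. The main obstacle throughout is precisely the quantitative step: establishing the sharp comparison between the continuous and shifted discrete Hilbert transforms together with the uniform-in-$N$ invertibility of the associated circulant linear system---the ``discrete Hilbert transform'' difficulty flagged in the abstract.
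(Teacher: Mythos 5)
Your strategy is sound and would prove the theorem, but it follows a genuinely different route, so let me compare. You take the classical consistency-plus-stability path: identify the continuous sheet density $\mu$, show that the discrete densities $\gamma_j^N$ converge in $\ell^\infty$ to the samples $2\pi\mu(\theta_j^N)$, and finish with a smooth quadrature estimate off the boundary. We never establish (nor need) this pointwise convergence of densities. Instead we use a \emph{second} representation of $u_R$ with the non-rotated kernel $\frac{x-y}{|x-y|^2}$, whose density on the disk is explicitly $-2u_P\cdot n$ (formula \eqref{boundary sheet circle 2}), together with the identity \eqref{vortex identity}, which converts the discrete sum of rotated kernels into an integral of the non-rotated kernel against the approximate trace $f_{\rm app}^N$; the error then reads $\frac1{4\pi^2}\int \frac{x-y}{|x-y|^2}\left(f_{\rm app}^N-f\right)$ and is controlled by the \emph{weak} convergence of $f_{\rm app}^N$ to $f$ against smooth test functions (Proposition \ref{prop 32}). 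Your route gives more (pointwise density convergence; and since $f$, hence $\mu$, is real-analytic here, your consistency and quadrature errors are actually superalgebraically small, so your argument would yield a rate better than $N^{-2}$ --- an interesting tension with the numerically observed optimality of $N^{-2}$); ours works at the level of the weak formulation and is what survives in the low-regularity, general-mesh, general-domain setting of \cite{ADL}.

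Three points in your write-up deserve attention, though none is fatal. First, the two computations you assert without carrying out --- the uniform lower bound on the nonzero circulant eigenvalues, and the consistency of the shifted discrete Hilbert transform --- are precisely the hard content. The first is our Proposition \ref{est l2}, which shows that $\frac1N A_N$ is an exact isometry on zero-mean vectors (equivalently, all nonzero eigenvalues have modulus exactly $N$); the second requires the regularization $\mu(\phi)\mapsto\mu(\phi)-\mu(\tilde\theta_i^N)$ (using \eqref{perfect distri} to kill the constant part) before invoking trapezoidal accuracy for the now-smooth periodic integrand. Second, the inverse of the discrete Hilbert transform is \emph{not} uniformly bounded on $\ell^\infty$: its $\ell^\infty\to\ell^\infty$ norm grows like $\log N$ (or $\sqrt N$ if you pass through the exact $\ell^2$ isometry); this is harmless for you only because your consistency error decays faster than any power of $N$. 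Third, system \eqref{point vortex} is not the circulant system: the $N$-th collocation equation is replaced by the circulation constraint, so the residual of $\hat\gamma^N$ at $\tilde\theta_N^N$ and the term $\sqrt N\left|\langle v\rangle\right|$ appearing in the stability estimate must be handled separately (they can be, since $f$ has zero mean and its equispaced Riemann sums converge fast).
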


Notice that in this particular case of the unit disk, we also have an explicit formula for $u_R$ thanks to \eqref{BS exterior}:
\begin{equation}\label{uR form}
	\begin{aligned}
		u_{R}(x) & = -\frac 1{2\pi} \int_{\Omega} \frac{(x-y^*)^\perp}{|x-y^*|^2} \omega(y)\, dy + \frac{\alpha}{2\pi} \frac{x^\perp}{|x|^2}
		\\
		& = -\frac 1{2\pi} \int_{B(0,1)} \frac{(x-y)^\perp}{|x-y|^2} \frac{1}{|y|^4}\omega(y^*)\, dy + \frac{\alpha}{2\pi} \frac{x^\perp}{|x|^2}.
	\end{aligned}
\end{equation}
Numerically, we indeed verify that the system \eqref{point vortex} is always invertible, and that the $L^\infty$-norm, on any compact set $K$, of the difference of $u_{\rm app}^N$ (given in \eqref{approx}) with $u_{R}$ (given in \eqref{uR form}) decreases as $1/N^2$, which is exactly the rate obtained in Theorem \ref{main theo}. This rate is therefore optimal, at least from the numerical viewpoint. It would be interesting to obtain a rigorous proof of optimality.

The remainder of this article is composed of four parts. In the following section, we establish important representation formulas for the solution of \eqref{eq uR}, which will be used in the proof of our main theorem, and we show the link between our problem and the circular Hilbert transform. Then, in Section \ref{section hilbert}, we prove that the linear system \eqref{point vortex} is invertible. In Section \ref{sect:conv}, we establish that $(u_{R}-u_{\rm app}^N)\cdot n\vert_{\partial \Omega}$ converges to zero in a weak sense. Finally, in the last section, we deduce that such a weak convergence implies the conclusion of Theorem \ref{main theo}.

Thus, the goal of this article is to give a first and simpler poof of validity of the vortex method when restricted to the particular case of the disk where the points $\{x_{i}^N,\tilde x_{i}^N\}_{i=1}^N$ are uniformly distributed. The full general case of an arbitrary exterior domain and more generally distributed meshes is treated in \cite{ADL}. Therein, we also consider the time dependence of the flow and non-zero velocities at infinity.

\begin{remark}
Removing the harmonic part $x^\perp/|x|^2$ and the circulation condition in \eqref{elliptic2} and \eqref{eq uR}, the main result can be readily adapted to describe an ideal fluid inside the unit disk (see \cite{ADL} for more details).
\end{remark}

\section{Boundary vortex sheets and the circular Hilbert transform}

We present now two distinct representation formulas --~other than \eqref{uR form}~-- for the solution $u_R$ of \eqref{eq uR}, which will be crucial for the justification of Theorem \ref{main theo} and whose understanding will shed light on the approximation of $u_R$ by point vortices on the boundary $\partial \Omega$.

Recall that we are considering some given vorticity $\omega\in C^{0,\alpha}_c\left(\Omega\right)$, with $0<\alpha\leq 1$, and $\gamma\in\mathbb{R}$, and wish to construct a velocity field $u_R\in C^0\left(\overline\Omega\right)\cap C^1\left(\Omega\right)$ solving \eqref{eq uR}.

We show now that it is possible to express the solution to \eqref{eq uR} as a vortex sheet on the boundary $\partial\Omega$, which, again, is consistent with the physical idea that the flow around an obstacle is produced by a boundary layer of vortices.

More precisely, we claim that $u_R$ can be expressed as a boundary vortex sheet:
\begin{equation}\label{boundary sheet}
	\begin{aligned}
		v(x) = & K_{\mathbb{R}^2}\left[g\delta_{\partial\Omega}\right]
		=\frac 1{2\pi}\int_{\partial\Omega} \frac{(x-y)^\perp}{|x-y|^2}g(y)dy \\
		= & -\frac 1{2\pi}\int_{\partial\Omega} \frac{(x-y)\cdot \tau(y)}{|x-y|^2}n(y)g(y)dy \\
		& + \frac 1{2\pi}\int_{\partial\Omega} \frac{(x-y)\cdot n(y)}{|x-y|^2}\tau(y)g(y)dy
		\in C^\infty\left(\mathbb{R}^2\setminus\partial\Omega\right),
	\end{aligned}
\end{equation}
for some suitable $g\in C^{0,\alpha}\left(\partial\Omega\right)$, with $0<\alpha\leq 1$. Notice that \eqref{approx} is essentially a discretization of \eqref{boundary sheet}.

Indeed, the theory of single and double layer potentials (or of Cauchy integrals, see \cite{musk}) instructs us that, for a smooth boundary $\partial\Omega$ and for any $g\in C^{0,\alpha}\left(\partial\Omega\right)$, the flow defined by \eqref{boundary sheet} is continuous up to the boundary $\partial\Omega$ (see \cite[Chap. 2, \S\ 16]{musk}), that is $v\in C\left(\overline \Omega\right)\cup C\left(\Omega^c\right)$, and that the limiting values of $v$ on $\partial\Omega$ are given by (see \cite[Chap. 2, \S\ 17]{musk})\begin{equation*}
	\begin{aligned}
		& \lim_{\substack{x\rightarrow x_0\in\partial\Omega \\ x\in\Omega\cup\overline{\Omega}^c}}\frac 1{2\pi}\int_{\partial\Omega} \frac{(x-y)\cdot \tau(y)}{|x-y|^2}n(y)g(y)dy
		=
		\frac 1{2\pi}\int_{\partial\Omega} \frac{(x_0-y)\cdot \tau(y)}{|x_0-y|^2}n(y)g(y)dy,
		\\
		& \lim_{\substack{x\rightarrow x_0\in\partial\Omega \\ x\in\Omega}}\frac 1{2\pi}\int_{\partial\Omega} \frac{(x-y)\cdot n(y)}{|x-y|^2}\tau(y)g(y)dy
		=
		\frac 1{2\pi}\int_{\partial\Omega} \frac{(x_0-y)\cdot n(y)}{|x_0-y|^2}\tau(y)g(y)dy
		+\frac 12 \tau(x_0)g(x_0),
		\\
		& \lim_{\substack{x\rightarrow x_0\in\partial\Omega \\ x\in\overline{\Omega}^c}}\frac 1{2\pi}\int_{\partial\Omega} \frac{(x-y)\cdot n(y)}{|x-y|^2}\tau(y)g(y)dy
		=
		\frac 1{2\pi}\int_{\partial\Omega} \frac{(x_0-y)\cdot n(y)}{|x_0-y|^2}\tau(y)g(y)dy
		-\frac 12 \tau(x_0)g(x_0),
	\end{aligned}
\end{equation*}
where the integral in the right-hand side of the first equation above is defined in the sense of Cauchy's principal value (note that, in the remaining equations, all integrals are defined in the usual sense).

Hence, we deduce that
\begin{equation*}
	\lim_{\substack{x\rightarrow x_0\in\partial\Omega \\ x\in\Omega}} v(x)=
	\frac 1{2\pi}\int_{\partial\Omega} \frac{(x_0-y)^\perp}{|x_0-y|^2}g(y)dy +
	\frac 12 \tau(x_0)g(x_0),
\end{equation*}
and
\begin{equation*}
	\lim_{\substack{x\rightarrow x_0\in\partial\Omega \\ x\in\overline{\Omega}^c}} v(x)=
	\frac 1{2\pi}\int_{\partial\Omega} \frac{(x_0-y)^\perp}{|x_0-y|^2}g(y)dy -
	\frac 12 \tau(x_0)g(x_0),
\end{equation*}
where, again, the integrals in the right-hand sides above are defined in the sense of Cauchy's principal value.

Therefore, we conclude that the flow $v(x)$ given by \eqref{boundary sheet} defines the unique solution $u_R(x)\in C^0\left(\overline\Omega\right)\cap C^1\left(\Omega\right)$ of \eqref{eq uR} if and only if $g\in C^{0,\alpha}\left(\partial\Omega\right)$ satisfies
\begin{equation}\label{density 1}
	\frac 1{2\pi}\int_{\partial\Omega} \frac{(x-y)^\perp}{|x-y|^2}\cdot n(x)g(y)dy
	=u_R\cdot n(x)=-u_P\cdot n(x),
	\quad\text{for every }x\in\partial\Omega,
\end{equation}
and
\begin{equation}\label{density 2}
	\begin{aligned}
		\int_{\partial\Omega} g(x) dx
		= &
		\int_{\partial\Omega}\left(\frac 1{2\pi}\int_{\partial\Omega} \frac{(x-y)^\perp}{|x-y|^2}g(y)dy +
		\frac 12 \tau(x)g(x)\right)\cdot\tau(x)dx
		\\
		& -\int_{\partial\Omega}\left(\frac 1{2\pi}\int_{\partial\Omega} \frac{(x-y)^\perp}{|x-y|^2}g(y)dy -
		\frac 12 \tau(x)g(x)\right)\cdot\tau(x)dx
		\\
		= & \int_{\partial\Omega} u_R\cdot\tau(x)dx - \int_{\overline{\Omega}^c}\curl v(x)dx = \gamma.
	\end{aligned}
\end{equation}

Again, we insist on the fact that the representation formula \eqref{boundary sheet} for the solution of system \eqref{eq uR} only involves the usual Biot-Savart kernel in the whole plane.

The existence of such a density $g\in C^{0,\alpha}\left(\partial\Omega\right)$ satisfying conditions \eqref{density 1} and \eqref{density 2} for any suitable given data is nontrivial, which we address now in the case of the unit disk only.

To this end, note that the singularity of the Biot-Savart kernel satisfies, for all $x,y\in\partial B(0,1)$, that
\begin{equation}\label{cot}
		\frac{\left(x-y\right)^\perp}{\left|x-y\right|^2}\cdot n(x)
		= \frac{-y^\perp\cdot x}{\left|x-y\right|^2}
		= \frac{-\cos\left(\frac\pi 2 + \phi - \theta\right)}{4\sin^2\left(\frac{\phi-\theta}{2}\right)}
		= \frac{\sin\left(\phi - \theta\right)}{4\sin^2\left(\frac{\phi-\theta}{2}\right)}
		= -\frac{1}{2}\cot\left(\frac{\theta-\phi}{2}\right),
\end{equation}
where $x=(\cos\theta,\sin\theta)$ and $y=(\cos\phi,\sin\phi)$, which is nothing but the kernel of the circular Hilbert transform.

Therefore, system \eqref{density 1}-\eqref{density 2} can be recast as
\begin{equation}\label{hilbert}
	\begin{aligned}
		& \int_0^{2\pi} \cot\left(\frac{\theta-\phi}{2}\right)g(\phi)d\phi = f(\theta), \quad \text{for every }\theta\in[0,2\pi],
		\\
		& \int_0^{2\pi}g(\phi)d\phi = \gamma,
	\end{aligned}
\end{equation}
where the $2\pi$-periodic function $g\in C^{0,\alpha}([0,2\pi])$, for some $0<\alpha\leq1$, is the unknown and the $2\pi$-periodic function $f:\R\rightarrow\R$ is defined by
\begin{equation}\label{f}
	f(\theta) = 4\pi[u_P\cdot n](\cos\theta,\sin\theta)\in C^\infty\left([0,2\pi]\right).
\end{equation}
As $u_P$ is smooth and divergence free, we note by the Stokes formula that
\begin{equation}\label{flux}
	\int_{0}^{2\pi} f=0.
\end{equation}

Clearly, solving system \eqref{hilbert} amounts to inverting the circular Hilbert transform
\begin{equation*}
	Hg(\theta) = \int_0^{2\pi} \cot\left(\frac{\theta-\phi}{2}\right)g(\phi)d\phi
	=-i\sum_{k\in\mathbb{Z}}\mathrm{sign}(k)\hat g(k)e^{ik\theta},
\end{equation*}
which is a well-known involution on the space of zero-mean periodic functions in $L^2([0,2\pi])$, that is to say
\begin{equation}\label{involution}
	H^2g(\theta)=-4\pi^2\left(g(\theta)-\frac 1{2\pi}\int_0^{2\pi}g(\phi)d\phi\right),\quad \text{for all }g\in L^2([0,2\pi]).
\end{equation}
It therefore follows that the solution to \eqref{hilbert} (or, equivalently, to \eqref{density 1}-\eqref{density 2}) is given by
\begin{equation*}
	g(\theta) = \frac{-1}{4\pi^2}Hf(\theta) + \frac{\gamma}{2\pi}
	= \frac{-1}{\pi}H[u_P\cdot n](\theta) + \frac{\gamma}{2\pi}\in C^\infty\left([0,2\pi]\right),
\end{equation*}
and is smooth, for $f$ is smooth, whereby, in view of \eqref{boundary sheet}, we obtain the following representation formula on the exterior of a disk:
\begin{equation*}
		u_R(x)
		=-\frac 1{2\pi^2}\int_{\partial B(0,1)} \frac{(x-y)^\perp}{|x-y|^2}H[u_P\cdot n](y)dy
		+\frac \gamma{4\pi^2}\int_{\partial B(0,1)} \frac{(x-y)^\perp}{|x-y|^2}dy.
\end{equation*}
In particular, we deduce, by comparing the above identity with \eqref{uR form} and by uniqueness of solutions to system \eqref{eq uR}, that it holds
\begin{equation}\label{ball circulation}
	\frac 1{2\pi}\int_{\partial B(0,1)} \frac{(x-y)^\perp}{|x-y|^2}dy = \frac{x^\perp}{|x|^2},\quad\text{for every }x\in\Omega.
\end{equation}
Thus, we finally conclude that
\begin{equation}\label{boundary sheet circle}
		u_R(x)
		=-\frac 1{2\pi^2}\int_{\partial B(0,1)} \frac{(x-y)^\perp}{|x-y|^2}H[u_P\cdot n](y)dy
		+\frac \gamma{2\pi} \frac{x^\perp}{|x|^2}.
\end{equation}

It turns out that there is yet another convenient representation formula for the flow $u_R$, which is a variant of the boundary vortex sheet \eqref{boundary sheet}.

More precisely, we claim now that in the exterior of a disk, $u_R$ can also be expressed as:
\begin{equation}\label{boundary sheet 2}
	\begin{aligned}
		w(x) = & \frac 1{2\pi}\int_{\partial\Omega} \frac{x-y}{|x-y|^2}h(y)dy
		+ \frac \gamma{2\pi} \frac{x^\perp}{|x|^2}\\
		= & \frac 1{2\pi}\int_{\partial\Omega} \frac{(x-y)\cdot n(y)}{|x-y|^2}n(y)h(y)\, dy + \frac 1{2\pi}\int_{\partial\Omega} \frac{(x-y)\cdot \tau(y)}{|x-y|^2}\tau(y)h(y)\,dy \\
		& + \frac \gamma{2\pi} \frac{x^\perp}{|x|^2}
		\in C^\infty\left(\mathbb{R}^2\setminus\partial\Omega\right),
	\end{aligned}
\end{equation}
for some suitable $h\in C^{0,\alpha}\left(\partial\Omega\right)$, with $0<\alpha\leq 1$.

As before, the theory of single and double layer potentials instructs us that, for a smooth boundary $\partial\Omega$ and for any $h\in C^{0,\alpha}\left(\partial\Omega\right)$, the flow $w$ is continuous up to the boundary $\partial\Omega$, that is $w\in C\left(\overline \Omega\right)\cup C\left(\Omega^c\right)$, and that the limiting values of $w$ on $\partial\Omega$ are given by
\begin{equation*}
	\begin{aligned}
		 &\lim_{\substack{x\rightarrow x_0\in\partial\Omega \\ x\in\Omega\cup\overline{\Omega}^c}}  \frac 1{2\pi}\int_{\partial\Omega} \frac{(x-y)\cdot \tau(y)}{|x-y|^2}
		\tau(y)h(y)\, dy
		 =
		\frac 1{2\pi}\int_{\partial\Omega} \frac{(x_0-y)\cdot \tau(y)}{|x_0-y|^2}\tau(y)h(y)\, dy,
		\\
		 &\lim_{\substack{x\rightarrow x_0\in\partial\Omega \\ x\in\Omega}}\frac 1{2\pi}\int_{\partial\Omega} \frac{(x-y)\cdot n(y)}{|x-y|^2}n(y)h(y)\,dy
		 =
		\frac 1{2\pi}\int_{\partial\Omega} \frac{(x_0-y)\cdot n(y)}{|x_0-y|^2}n(y)h(y)\, dy
		+\frac 12 n(x_0)h(x_0),
		\\
		& \lim_{\substack{x\rightarrow x_0\in\partial\Omega \\ x\in\overline{\Omega}^c}}\frac 1{2\pi}\int_{\partial\Omega} \frac{(x-y)\cdot n(y)}{|x-y|^2}n(y)h(y)\, dy
		 =
		\frac 1{2\pi}\int_{\partial\Omega} \frac{(x_0-y)\cdot n(y)}{|x_0-y|^2}n(y)h(y)\, dy
		-\frac 12 n(x_0)h(x_0),
	\end{aligned}
\end{equation*}
where the integral in the right-hand side of the first equation above is defined in the sense of Cauchy's principal value (note that, in the remaining equations, all integrals are defined in the usual sense).

Hence, we deduce that
\begin{equation*}
	\lim_{\substack{x\rightarrow x_0\in\partial\Omega \\ x\in\Omega}} w(x)=
	\frac 1{2\pi}\int_{\partial\Omega} \frac{x_0-y}{|x_0-y|^2}h(y)dy +
	\frac 12 n(x_0)h(x_0)+ \frac \gamma{2\pi} \frac{x_{0}^\perp}{|x_{0}|^2},
\end{equation*}
and
\begin{equation*}
	\lim_{\substack{x\rightarrow x_0\in\partial\Omega \\ x\in\overline{\Omega}^c}} w(x)=
	\frac 1{2\pi}\int_{\partial\Omega} \frac{x_0-y}{|x_0-y|^2}h(y)dy -
	\frac 12 n(x_0)h(x_0) + \frac \gamma{2\pi} \frac{x_{0}^\perp}{|x_{0}|^2},
\end{equation*}
where, again, the integrals in the right-hand sides above are defined in the sense of Cauchy's principal value.

Therefore, we conclude that the flow $w(x)$ given by \eqref{boundary sheet 2} defines the unique solution $u_R(x)\in C^0\left(\overline\Omega\right)\cap C^1\left(\Omega\right)$ of \eqref{eq uR} if and only if $h\in C^{0,\alpha}\left(\partial\Omega\right)$ satisfies
\begin{equation}\label{density 3}
	\frac 1{2\pi}\int_{\partial\Omega} \frac{x-y}{|x-y|^2}\cdot n(x)h(y)dy + \frac 12 h(x)
	=u_R\cdot n(x)=-u_P\cdot n(x),
	\quad\text{for every }x\in\partial\Omega,
\end{equation}
and
\begin{equation}\label{density 4}
	\begin{aligned}
		\int_{\partial\Omega} h(x) dx
		= &
		\int_{\partial\Omega}\left(\frac 1{2\pi}\int_{\partial\Omega} \frac{x-y}{|x-y|^2}h(y)dy +
		\frac 12 n(x)h(x) + \frac{\gamma}{2\pi}\frac{x^\perp}{|x|^2}\right)\cdot n(x)dx
		\\
		& -\int_{\partial\Omega}\left(\frac 1{2\pi}\int_{\partial\Omega} \frac{x-y}{|x-y|^2}h(y)dy -
		\frac 12 n(x)h(x)+ \frac{\gamma}{2\pi}\frac{x^\perp}{|x|^2}\right)\cdot n(x)dx
		\\
		= & \int_{\partial\Omega} u_R\cdot n(x)dx - \int_{\overline{\Omega}^c}\div w(x)dx = -\int_{\partial\Omega} u_P\cdot n(x)dx=0.
	\end{aligned}
\end{equation}
Note that the circulation condition
\begin{equation*}
	\begin{aligned}
		\int_{\partial\Omega} u_R\cdot\tau(x)dx
		= &
		\int_{\partial\Omega}\left(\frac 1{2\pi}\int_{\partial\Omega} \frac{x-y}{|x-y|^2}h(y)dy +
		\frac 12 n(x)h(x) + \frac \gamma{2\pi} \frac{x^\perp}{|x|^2} \right)\cdot\tau(x)dx
		\\
		= & \int_{\partial\Omega}\left(\frac 1{2\pi}\int_{\partial\Omega} \frac{x-y}{|x-y|^2}h(y)dy -
		\frac 12 n(x)h(x)\right)\cdot\tau(x)dx + \gamma
		\\
		= & \int_{\overline{\Omega}^c}\curl \left(\frac 1{2\pi}\int_{\partial\Omega} \frac{x-y}{|x-y|^2}h(y)dy\right)   dx + \gamma = \gamma,
	\end{aligned}
\end{equation*}
is automatically satisfied.

The existence of such a density $h\in C^{0,\alpha}\left(\partial\Omega\right)$ satisfying conditions \eqref{density 3} and \eqref{density 4} for any suitable given data is nontrivial, which we address now in the case of the unit disk only.

To this end, note that the singularity of the Biot-Savart kernel satisfies, for all $x,y\in\partial B(0,1)$, that
\begin{equation*}
		\frac{x-y}{\left|x-y\right|^2}\cdot n(x)
		= \frac{1-y\cdot x}{\left|x-y\right|^2}
		= \frac{1-\cos\left(\phi - \theta\right)}{4\sin^2\left(\frac{\phi-\theta}{2}\right)}
		= \frac{1}{2},
\end{equation*}
where $x=(\cos\theta,\sin\theta)$ and $y=(\cos\phi,\sin\phi)$.

Therefore, it is readily seen that system \eqref{density 3}-\eqref{density 4} is uniquely solved by
\begin{equation*}
	h(\theta)=-2u_P\cdot n(\theta)\in C^\infty\left([0,2\pi]\right),
\end{equation*}
whereby, in view of \eqref{boundary sheet 2}, we obtain the following representation formula on the exterior of a disk:
\begin{equation}\label{boundary sheet circle 2}
	u_R(x) = -\frac 1{\pi}\int_{\partial B(0,1)} \frac{x-y}{|x-y|^2}(u_P\cdot n)(y)dy
	+ \frac \gamma{2\pi} \frac{x^\perp}{|x|^2}.
\end{equation}

It then follows, by comparing \eqref{boundary sheet circle 2} with \eqref{boundary sheet circle} and by uniqueness of solutions to system \eqref{eq uR}, that
\begin{equation*}
	\int_{\partial B(0,1)} \frac{x-y}{|x-y|^2}(u_P\cdot n)(y)dy
	=
	\frac 1{2\pi}\int_{\partial B(0,1)} \frac{(x-y)^\perp}{|x-y|^2}H[u_P\cdot n](y)dy,
	\quad\text{for every }x\in\Omega,
\end{equation*}
whence we infer that, replacing $u_P\cdot n$ by $H\varphi$ in view of the arbitrariness of zero-mean boundary data in \eqref{eq uR} and using the inversion of the Hilbert transform \eqref{involution},
\begin{equation*}
	\begin{aligned}
		\int_{\partial B(0,1)} \frac{x-y}{|x-y|^2}H\varphi(y)dy
		& =
		\frac 1{2\pi}\int_{\partial B(0,1)} \frac{(x-y)^\perp}{|x-y|^2}H^2\varphi(y)dy
		\\ & =
		-{2\pi}\int_{\partial B(0,1)} \frac{(x-y)^\perp}{|x-y|^2}\left(\varphi(y)-\frac1{2\pi}\int_{\partial B(0,1)}\varphi(z)dz\right)dy,
		\\
		& \text{for every }x\in\Omega.
	\end{aligned}
\end{equation*}
Hence, for $y=(\cos\phi,\sin\phi)\in\partial B(0,1)$ and $x\in \Omega$, we have by \eqref{ball circulation}:
	\begin{align*}
		\int_{0}^{2\pi}\int_{0}^{2\pi} \frac{x-y}{|x-y|^2} \cot\left(\frac{\phi-\theta}2 \right) \varphi(\theta)d\theta d\phi
		& =
		-{2\pi}\int_{0}^{2\pi} \frac{(x-y)^\perp}{|x-y|^2}\varphi(\phi) d\phi +2\pi   \frac{x^\perp}{|x|^2}    \int_{\partial B(0,1)}\varphi(z)dz,\\
		\int_{0}^{2\pi}\int_{0}^{2\pi} \frac{x-z}{|x-z|^2} \cot\left(\frac{\theta-\phi}2 \right) \varphi(\phi)d\phi d\theta
		& =
		-{2\pi}\int_{0}^{2\pi} \frac{(x-y)^\perp}{|x-y|^2}\varphi(\phi) d\phi +2\pi   \frac{x^\perp}{|x|^2}   \int_{0}^{2\pi}\varphi(\phi) d\phi,
	\end{align*}
where $z=(\cos\theta,\sin\theta)$.
Finally, by the arbitrariness of $\varphi$, we conclude
\begin{equation}\label{vortex identity}
	\begin{aligned}
		\frac{1}{2\pi}\int_{\partial B(0,1)}
		\frac{x-z}{|x-z|^2}
		\cot\left(\frac{\phi-\theta}{2}\right) dz
		=
		\frac{(x-y)^\perp}{|x-y|^2}
		-\frac{x^\perp}{|x|^2},
		\quad\text{for every }x\in\Omega\text{ and }y\in\partial B(0,1),
	\end{aligned}
\end{equation}
which will be useful later on. Once again, we insist on the fact that the above integral is defined is the sense of Cauchy's principal value.

\begin{remark}
As explained in the introduction, our goal is to justify that $u_{\rm app}^N$ \eqref{approx} is a good discretization of the formulation \eqref{boundary sheet circle}. In fact, it would be easier, at least numerically, to discretize  \eqref{boundary sheet circle 2} which would give us a direct approximation of $u_{R}$ without inverting large matrices (related to the computation of the inverse Hilbert transform $H^{-1}$; see Section \ref{section hilbert}).

For more general geometries of $\Omega$, we prove in \cite{ADL} that there also exist densities $g$ and $h$ satisfying the above conditions \eqref{density 1}, \eqref{density 2}, \eqref{density 3} and \eqref{density 4}. However, \eqref{boundary sheet circle 2} does not hold anymore and so, the processes to get $g$ or $h$ involve similar difficulties.
\end{remark}

\section{Solving system \eqref{point vortex} and the discrete circular Hilbert transform}\label{section hilbert}

Using \eqref{cot} and considering the angles $\{ \theta_i^N \}$ and $\{ \tilde \theta_i^N \} $ associated to $\{ x_i^N \}$ and $\{ \tilde x_i^N \} $ (see \eqref{xi}-\eqref{tildexi}), the system \eqref{point vortex} of $N$ equations can be recast as
\begin{equation}\label{point toy}
	\begin{aligned}
	&	\frac1N\sum_{j=1}^N \gamma_{j}^N \cot\left(\frac{\tilde\theta_{i}^N-\theta_{j}^N}2\right)  = f(\tilde \theta_{i}^N), \quad \text{for all }i=1,\dots, N-1,\\
	&	\frac1N\sum_{i=1}^N \gamma_{i}^N  = \gamma,
	\end{aligned}
\end{equation}
where $\gamma^N=(\gamma_{1}^N,\dots, \gamma_{N}^N)\in\mathbb{R}^N$ is the unknown and $f$ is defined in \eqref{f}. Loosely speaking, solving system \eqref{point toy} amounts to inverting a discrete Hilbert transform on the circle. Indeed, \eqref{point toy} clearly is a discretization of \eqref{hilbert}.

From now on, we will also conveniently denote the matrices:
\[
A_{N-1,N} := 
\left(\cot\left(\frac{\tilde\theta_{i}^N-\theta_{j}^N}2\right)
\right)_{1\leq i\leq N-1,1\leq j\leq N}
\quad \text{and}\quad 
A_{N} := 
\left(\cot\left(\frac{\tilde\theta_{i}^N-\theta_{j}^N}2\right)
\right)_{1\leq i, j\leq N},
\]
and we will make use of the following notations for $z\in \R^N$:
\begin{equation*}
	\begin{aligned}
		\| z\|_{\ell^p} & := \Big(\frac1N \sum_{i=1}^N |z_{i}|^p \Big)^{1/p}, \quad \text{for any }p\in [1,\infty),\\
		\|z\|_{\ell^\infty} & := \max_{i=1,\dots,N}  |z_{i}|,\\
		\langle z \rangle & := \frac1N\sum_{i=1}^N z_{i}.
	\end{aligned}
\end{equation*}
Note that, with this normalization of the norms, we have:
\[
\|z\|_{\ell^p}\leq \| z \|_{\ell^q}, \text{ for any } 1\leq p\leq q \leq \infty.
\]

Finally, for the uniformly distributed mesh \eqref{mesh}, notice that, by odd symmetry of the cotangent function,
\begin{equation}\label{perfect distri}
	\sum_{1\leq j \leq N} \cot\left(\frac{\tilde\theta_{i}^N-\theta_{j}^N}2\right)=0,
	\qquad\text{and}\qquad
	\sum_{1\leq j \leq N} \cot\left(\frac{\tilde\theta_{j}^N-\theta_{i}^N}2\right) =0,
\end{equation}
for each $i=1,\ldots,N$. In fact, it can be shown (see \cite{ADL}) that the only possible mesh satisfying \eqref{perfect distri} and $\theta_1^N=0$ is necessarily given by \eqref{mesh}.

\begin{remark}
These cancellations will be used several times in the following proofs and are related with the continuous version $\displaystyle\int_{0}^{2\pi} \cot\left(\frac{\phi-\theta}2\right) d\theta =0$. As the oddness of the cotangent function plays a crucial role to define the Cauchy's principal value, the symmetry of the points $(\theta_{i}^N,\tilde \theta_{i}^N)$ is important to get a suitable discretization of the Hilbert transform.
\end{remark}

The first result in this section is a precise $\ell^2$-estimate on $A_N$ for the uniformly distributed mesh \eqref{mesh}.
\begin{proposition}\label{est l2}
	Consider the uniformly distributed mesh $(\theta_{1}^N,\dots , \theta_{N}^N),(\tilde \theta_{1}^N, \dots , \tilde \theta_{N}^N)\in [0,2\pi)^N$ defined by \eqref{mesh}.
	
	Then, for any $z\in\mathbb{R}^N$, we have that
	\[
	\| z - \langle z \rangle \mathbf{1} \|_{\ell^2}= \frac1N \| A_{N} z \|_{\ell^2},
	\]
	where $\mathbf{1}=(1,\ldots,1)\in\mathbb{R}^N$.
\end{proposition}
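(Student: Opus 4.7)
The plan is to reduce the norm identity to the stronger matrix formula
\[
A_N^T A_N = N^2 I - N J,
\]
where $J$ denotes the $N\times N$ matrix whose entries are all equal to $1$. Granting this, a direct computation of $z^T A_N^T A_N z = N^2\|z\|_2^2 - N\bigl(\sum_i z_i\bigr)^2$ (in the unweighted Euclidean norm), combined with $\sum_i z_i = N\langle z\rangle$ and the variance identity $\|z - \langle z\rangle\mathbf{1}\|_{\ell^2}^2 = \|z\|_{\ell^2}^2 - \langle z\rangle^2$, rearranges to $\|A_N z\|_{\ell^2}^2 = N^2 \|z - \langle z\rangle\mathbf{1}\|_{\ell^2}^2$, which is the claim after taking square roots.

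To compute the entries, I would start from
\[
(A_N^T A_N)_{ij} = \sum_{k=1}^{N} \cot\!\Big(\frac{\tilde\theta_k^N - \theta_i^N}{2}\Big)\cot\!\Big(\frac{\tilde\theta_k^N - \theta_j^N}{2}\Big)
\]
and exploit the crucial structural fact that the difference of the two cotangent arguments equals $(\theta_j^N - \theta_i^N)/2 = (j-i)\pi/N$, independent of the summation index $k$. For $i\not\equiv j \pmod N$ this allows the use of the addition identity $\cot a\,\cot b = \cot(a-b)\bigl[\cot b - \cot a\bigr] - 1$ (valid whenever $a - b \notin \pi\mathbb Z$). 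Summing over $k$, the prefactor $\cot((j-i)\pi/N)$ factors out as a constant and the bracketed contribution vanishes thanks to the two cancellations \eqref{perfect distri}; hence $(A_N^T A_N)_{ij} = -N$ for $i\neq j$.

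The diagonal entries require separate treatment. By the rotational invariance of the mesh they collapse to the single value
\[
(A_N^T A_N)_{ii} = \sum_{k=0}^{N-1}\cot^2\!\Big(\frac{(2k+1)\pi}{2N}\Big),
\]
which, writing $\cot^2 = \csc^2 - 1$, reduces the task to evaluating the cosecant sum $\sum_{k=0}^{N-1}\csc^2((2k+1)\pi/(2N))$. I would derive this from the classical Mittag-Leffler expansion $\pi^2\csc^2(\pi x) = \sum_{n\in\mathbb Z}(x - n)^{-2}$ evaluated at $x_k = (2k+1)/(2N)$: the resulting double sum over $k$ and $n$ repackages into $(4N^2/\pi^2)\sum_{m\ \text{odd}} m^{-2} = (4N^2/\pi^2)(\pi^2/4) = N^2$. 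This yields $(A_N^T A_N)_{ii} = N^2 - N$, completing the matrix identity $A_N^T A_N = N^2 I - N J$.

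I expect the cosecant summation to be the only non-routine step; everything else reduces to the cotangent subtraction formula and to the symmetries \eqref{perfect distri}. An alternative route is to diagonalize the circulant matrix $A_N$ directly via the discrete Fourier transform and exhibit its eigenvalues, but working with $A_N^T A_N$ has the advantage of bypassing the individual eigenvalue computations and delivering the sharp norm identity with minimal trigonometric effort.
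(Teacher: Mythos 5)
Your argument is correct, and it reaches a slightly stronger conclusion than the paper by establishing the full Gram matrix identity $A_N^TA_N=N^2I-NJ$, from which the norm identity follows by the variance computation you indicate. The off-diagonal part of your computation is in fact the same engine as the paper's proof: the identity $\cot a\cot b=\cot(a-b)[\cot b-\cot a]-1$ together with the two cancellations \eqref{perfect distri} gives $(A_N^TA_N)_{ij}=-N$ for $i\neq j$. Where the two routes diverge is in the treatment of the diagonal. The paper polarizes first, writing $2z_iz_j=z_i^2+z_j^2-(z_i-z_j)^2$, so that the diagonal terms are multiplied by the factor $(z_i-z_j)^2$, which vanishes when $i=j$; the remaining ``square'' terms die by \eqref{perfect distri}, and the cotangent identity is only ever invoked for $i\neq j$. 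This completely sidesteps the evaluation of $\sum_{k}\cot^2\bigl(\tfrac{(2k+1)\pi}{2N}\bigr)$, which is the one genuinely non-routine step in your version. Your Mittag--Leffler evaluation of the cosecant sum ($\sum_{k=0}^{N-1}\csc^2\bigl(\tfrac{(2k+1)\pi}{2N}\bigr)=N^2$, hence the diagonal entry $N^2-N$) is correct --- the map $(m,n)\mapsto 2m+1-2Nn$ does enumerate the odd integers exactly once --- so the proof goes through. What your approach buys is the explicit spectral information ($A_N^TA_N$ has eigenvalue $N^2$ on the mean-zero hyperplane and $0$ on constants, consistent with $A_N\mathbf 1=0$), which is more than the proposition asks for; what the paper's symmetrization buys is the avoidance of any infinite-series or special-value computation, keeping the argument entirely finite and elementary.
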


\begin{proof}
	First, we compute
	\begin{equation*}
	\begin{split}
	N\| A_{N} z \|_{\ell^2}^2 
	=& \sum_{1\leq k \leq N} \Big| \sum_{1\leq j \leq N} \cot\left(\frac{\tilde\theta_{k}^N-\theta_{j}^N}2\right)z_{j}\Big|^2 =  \sum_{1\leq k \leq N}  \sum_{1\leq i,j \leq N} \cot\left(\frac{\tilde\theta_{k}^N-\theta_{i}^N}2\right)\cot\left(\frac{\tilde\theta_{k}^N-\theta_{j}^N}2\right)z_{i} z_{j} \\
	=&-\frac12 \sum_{1\leq i,j \leq N} (z_{i} - z_{j})^2 \sum_{1\leq k \leq N}  \cot\left(\frac{\tilde\theta_{k}^N-\theta_{i}^N}2\right)\cot\left(\frac{\tilde\theta_{k}^N-\theta_{j}^N}2\right)\\
	&+ \sum_{1\leq i,k \leq N} |z_{i}|^2   \cot\left(\frac{\tilde\theta_{k}^N-\theta_{i}^N}2\right)  \sum_{1\leq j \leq N} \cot\left(\frac{\tilde\theta_{k}^N-\theta_{j}^N}2\right).
	\end{split}
	\end{equation*}
	Note that the last sum in the right-hand side is equal to zero by \eqref{perfect distri}.
	
	As for the remaining term above, we use the following elementary relation, valid for any $a,b$ such that $a,b,a-b \notin \pi \Z$:
	\[
		\cot a \cot b = \cot (b-a)[\cot a - \cot b]-1,
	\]
	to write
	\begin{equation*}
	\begin{split}
	N\| A_{N} z \|_{\ell^2}^2 
	=& -\frac12 \sum_{1\leq i\neq j \leq N} (z_{i} - z_{j})^2  \cot\left(\frac{\theta_{i}^N-\theta_{j}^N}2\right) \sum_{1\leq k \leq N} \Bigl[ \cot\left(\frac{\tilde\theta_{k}^N-\theta_{i}^N}2\right)-\cot\left(\frac{\tilde\theta_{k}^N-\theta_{j}^N}2\right) \Bigl]\\
	&+\frac{N}2\sum_{1\leq i\neq j \leq N} (z_{i} - z_{j})^2\\
	=&\frac{N}2\sum_{1\leq i,j \leq N} (z_{i} - z_{j})^2,
	\end{split}
	\end{equation*}
	where we have also used \eqref{perfect distri}.
	
	Finally, the last sum is easily recast as
	\begin{equation*}\begin{split}
	\frac{N}2\sum_{1\leq i,j \leq N} (z_{i} - z_{j})^2& = N\sum_{1\leq i,j \leq N} (z_{i} -\langle z\rangle)^2 - N \sum_{1\leq i,j \leq N} (z_{i} -\langle z\rangle)(z_{j} -\langle z\rangle)\\
	&=N^2 \sum_{1\leq i \leq N} (z_{i} -\langle z\rangle)^2 = N^3 \| z - \langle z\rangle\mathbf{1} \|_{\ell^2}^2.
	\end{split}\end{equation*}
	We have therefore obtained that
	\[
	N\| A_{N} z \|_{\ell^2}^2  = N^3 \| z - \langle z\rangle\mathbf{1} \|_{\ell^2}^2,
	\]
	which ends the proof of the proposition.
\end{proof}

The preceding proposition allows us to get the existence and the uniqueness of the solution to \eqref{point toy}.

\begin{corollary}\label{inverse perfect}
	Consider the uniformly distributed mesh $(\theta_{1}^N,\dots , \theta_{N}^N),(\tilde \theta_{1}^N, \dots , \tilde \theta_{N}^N)\in [0,2\pi)^N$ defined by \eqref{mesh}.
	
	Then, for any $v\in \R^{N-1}$ and $\gamma \in \R$, the following problem:
	\begin{equation}\label{prob}
	z\in \R^N,\quad \frac1N A_{N-1,N}z=v, \quad \langle z \rangle = \gamma,
	\end{equation}
	has a unique solution. Moreover, this solution satisfies:
	\begin{equation}\label{est l1}
	\| z \|_{\ell^1}\leq \| z \|_{\ell^2} \leq \| v \|_{\ell^2} + |\gamma| + \sqrt N \left|\langle v \rangle\right|
	\leq \| v \|_{\ell^\infty} + |\gamma| + \sqrt N \left|\langle v \rangle\right|.
	\end{equation}
\end{corollary}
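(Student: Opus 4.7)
The plan is to reduce the $N$ equations in \eqref{prob} to a single identity involving the full square matrix $A_N$, and then read off everything from Proposition~\ref{est l2}. The key observation is that although \eqref{prob} only uses the $N-1$ rows of $A_{N-1,N}$ (plus the mean constraint), the second identity in \eqref{perfect distri} asserts that the columns of $A_N$ sum to zero, i.e. $\mathbf{1}^\top A_N = 0$. Consequently, for any $z\in\R^N$, the last component $(A_N z)_N$ is determined by the first $N-1$ components through
\[
(A_N z)_N \;=\; -\sum_{i=1}^{N-1}(A_N z)_i.
\]

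First, I would establish uniqueness. Suppose $z$ solves \eqref{prob} with $v=0$ and $\gamma=0$. Then $(A_N z)_i=Nv_i=0$ for $i=1,\dots,N-1$ and the identity above forces $(A_N z)_N=0$, so $A_N z=0$. Proposition~\ref{est l2} then gives $\|z-\langle z\rangle\mathbf{1}\|_{\ell^2}=0$, hence $z=\langle z\rangle\mathbf{1}=\gamma\mathbf{1}=0$. Thus the square $N\times N$ system defined by the $N-1$ equations $\frac1N A_{N-1,N}z=v$ together with $\langle z\rangle=\gamma$ has trivial kernel, which by finite-dimensional linear algebra yields existence and uniqueness of a solution $z\in\R^N$ for every data $(v,\gamma)\in\R^{N-1}\times\R$.

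Next, I would derive the estimate \eqref{est l1} for this solution. From the definition of $A_N$ and the equations satisfied by $z$, the vector $A_N z\in\R^N$ has components
\[
(A_N z)_i = Nv_i \text{ for } i=1,\dots,N-1,\qquad (A_N z)_N = -N\sum_{i=1}^{N-1} v_i = -N^2\langle v\rangle,
\]
where $\langle v\rangle=\frac1N\sum_{i=1}^{N-1}v_i$ in the normalization used for vectors of $\R^N$. A direct computation then gives
\[
\frac1{N^2}\|A_N z\|_{\ell^2}^2 \;=\; \frac1{N^3}\Bigl(N^2\sum_{i=1}^{N-1}v_i^2 + N^4\langle v\rangle^2\Bigr)\;=\;\|v\|_{\ell^2}^2 + N\langle v\rangle^2.
\]
Applying Proposition~\ref{est l2}, together with $\langle z\rangle=\gamma$, we obtain
\[
\|z-\gamma\mathbf{1}\|_{\ell^2} \;=\; \frac1N\|A_N z\|_{\ell^2} \;=\; \sqrt{\|v\|_{\ell^2}^2 + N\langle v\rangle^2} \;\leq\; \|v\|_{\ell^2}+\sqrt{N}|\langle v\rangle|.
\]
Since $\|\mathbf{1}\|_{\ell^2}=1$, the triangle inequality yields $\|z\|_{\ell^2}\leq \|z-\gamma\mathbf{1}\|_{\ell^2}+|\gamma|\leq \|v\|_{\ell^2}+|\gamma|+\sqrt{N}|\langle v\rangle|$. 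The bookend inequalities $\|z\|_{\ell^1}\leq\|z\|_{\ell^2}$ and $\|v\|_{\ell^2}\leq\|v\|_{\ell^\infty}$ are the normalized norm comparisons already recorded in the paper.

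I do not expect any real obstacle here: the entire argument is pinned on recognizing that \eqref{perfect distri} exactly compensates for the ``missing'' $N$-th equation, converting the underdetermined system $A_{N-1,N}z=Nv$ into the fully determined $A_N z=(Nv_1,\dots,Nv_{N-1},-N^2\langle v\rangle)$, after which Proposition~\ref{est l2} does all the work. The only point to be careful about is the bookkeeping of the normalization of $\|\cdot\|_{\ell^2}$ and $\langle\cdot\rangle$ on $\R^{N-1}$ versus $\R^N$, which is what produces the $\sqrt{N}$ in front of $|\langle v\rangle|$.
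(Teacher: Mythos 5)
Your proof is correct and follows essentially the same route as the paper: both arguments use the column-sum cancellation \eqref{perfect distri} to complete the $(N-1)\times N$ system into the full square system $A_N z=(Nv_1,\dots,Nv_{N-1},-N\sum_{i<N}v_i)$ and then invoke Proposition~\ref{est l2} for injectivity and the quantitative bound (the paper packages this as a map $\Phi:\R^N\to\R^{N+1}$ whose image is the zero-sum hyperplane, which is the same idea). The only discrepancy is your convention $\langle v\rangle=\frac1N\sum_{i=1}^{N-1}v_i$ versus the paper's $\frac1{N-1}\sum_{i=1}^{N-1}v_i$ for $v\in\R^{N-1}$; both yield the stated estimate, so this is harmless bookkeeping.
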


\begin{proof}
	Let us define
	\begin{equation*}
		\begin{split}
			\Phi\ :\quad \R^N &\to \R^{N+1}\\
			 z &\mapsto  
			\begin{pmatrix}
				\frac1N A_{N} z \\ \langle z \rangle
			\end{pmatrix},
		\end{split}
	\end{equation*}
	which is an injective linear mapping (see Proposition \ref{est l2}). Therefore, $\Phi$ is bijective from $\R^N$ onto ${\rm Im\ }\Phi$.
	
	Moreover, noting that, for any $z\in \R^N$, we have
	\[
	\langle A_{N} z \rangle = \frac1N \sum_{j=1}^N z_{j} \sum_{i=1}^N \cot\left(\frac{\tilde\theta_{i}^N-\theta_{j}^N}2\right)=0,
	\]
	by \eqref{perfect distri}, and that $\dim\left(\mathrm{Im\ }\Phi\right)=N$, we conclude
	\[
	{\rm Im\ }\Phi = \left\{ u \in \R^{N+1}\ :\ \sum_{i=1}^N u_{i} =0 \right\}. 
	\]

	Now, let $v\in \R^{N-1}$ and $\gamma \in \R$ be fixed. There exists a unique $v_{N}$ such that
	$
	\begin{pmatrix}
	v\\v_{N}\\\gamma
	\end{pmatrix}
	\in {\rm Im\ }\Phi
	$, namely $v_{N}=-\sum_{i=1}^{N-1}v_{i}$. With this $v_{N}$, we then deduce the existence of $z\in \R^N$ such that $\Phi(z)=\begin{pmatrix}
	v\\v_{N}\\\gamma
	\end{pmatrix}
	$. In particular $z$ is a solution to \eqref{prob} and, by Proposition \ref{est l2}, it holds that
	\[
	\| z -\gamma\mathbf{1}\|_{\ell^2} = \| z - \langle z\rangle\mathbf{1}\|_{\ell^2} =  \frac1N \| A_{N} z \|_{\ell^2} =\Big( \frac1N \sum_{i=1}^{N}|v_{i}|^2 \Big)^{1/2} = \Big( \frac1N \sum_{i=1}^{N-1}|v_{i}|^2 +\frac1N \Big|\sum_{i=1}^{N-1}v_{i}\Big|^2\Big)^{1/2}.
	\]
	As $\| z \|_{\ell^2}-|\gamma| \leq  \| z -\gamma\mathbf{1}\|_{\ell^2}$ and
	\[
	\Big( \frac1N \sum_{i=1}^{N-1}|v_{i}|^2 +\frac1N \Big|\sum_{i=1}^{N-1}v_{i}\Big|^2\Big)^{1/2}
	\leq\Big( \frac1N \sum_{i=1}^{N-1}|v_{i}|^2\Big)^{1/2} +\frac{1}{\sqrt N} \Big|\sum_{i=1}^{N-1}v_{i}\Big|
	=
	\sqrt{\frac{N-1}N} \|v \|_{\ell^2} +\frac{N-1}{\sqrt N} \left|\langle v \rangle\right|,
	\]
	we conclude that
	\[
	\| z \|_{\ell^2} \leq \| v \|_{\ell^2} + |\gamma| + \sqrt N \left|\langle v \rangle\right|.
	\]
	
	Finally, concerning the uniqueness of a solution to \eqref{prob}, let us consider $z$ and $\tilde z$ two solutions of \eqref{prob}. Then, $\Phi(z-\tilde z)=\begin{pmatrix}
	0_{\R^{N-1}}\\x\\0
	\end{pmatrix}$
	(for some $x\in \R$) belongs to ${\rm Im\ }\Phi$ if only if $x= 0$. By injectivity of $\Phi$, we conclude that necessarily $z=\tilde z$, thereby completing the proof of the corollary.
\end{proof}

\section{Weak convergence of the discrete circular Hilbert transform}\label{sect:conv}

The results in this section will serve to show that $(u_{R}-u_{\rm app}^N)\cdot n\vert_{\partial \Omega}$ vanishes in a weak sense.

The following elementary lemma is a reminder about standard estimates on the rate of convergence of Riemann sums.

\begin{lemma}\label{riemann}
	Consider the uniformly distributed mesh $(\theta_{1}^N,\dots , \theta_{N}^N),(\tilde \theta_{1}^N, \dots , \tilde \theta_{N}^N)\in [0,2\pi)^N$ defined by \eqref{mesh} and let $g$ be a smooth periodic function.
	
	Then, for any $0<\alpha\leq1$ and $k=0,1$,
	\begin{equation*}
		\left|\int_0^{2\pi}g(\theta){\rm d}\theta - \frac{2\pi}{N} \sum_{i=1}^{N} g(\tilde \theta_{i}^N) \right|\leq \frac{C}{N^{k+\alpha}}\|g \|_{C^{k,\alpha}},
	\end{equation*}
	for some independent constant $C>0$.
\end{lemma}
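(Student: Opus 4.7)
The plan is to recognize this as the classical error estimate for the midpoint quadrature rule on a periodic function, where the key observation is that for the uniform mesh \eqref{mesh}, each $\tilde\theta_i^N$ is exactly the midpoint of the interval $I_i := [\theta_i^N,\theta_{i+1}^N]$ (setting $\theta_{N+1}^N = 2\pi$). The natural first step is to split
\begin{equation*}
\int_0^{2\pi}g(\theta)\,d\theta - \frac{2\pi}{N}\sum_{i=1}^{N} g(\tilde\theta_i^N)
= \sum_{i=1}^{N}\int_{I_i}\bigl(g(\theta)-g(\tilde\theta_i^N)\bigr)\,d\theta,
\end{equation*}
and estimate each piece separately.

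For $k=0$, I would simply invoke the Hölder regularity: for $\theta\in I_i$ one has $|\theta-\tilde\theta_i^N|\leq \pi/N$, hence $|g(\theta)-g(\tilde\theta_i^N)|\leq \|g\|_{C^{0,\alpha}}(\pi/N)^{\alpha}$. Integrating over $I_i$ (of length $2\pi/N$) and summing over $i=1,\dots,N$ yields the bound $C\|g\|_{C^{0,\alpha}} N^{-\alpha}$.

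For $k=1$, the ``extra'' factor of $1/N$ comes from the symmetry property of the midpoint rule. Since $\tilde\theta_i^N$ is the midpoint of $I_i$, the first moment vanishes, $\int_{I_i}(\theta-\tilde\theta_i^N)\,d\theta=0$, so I may freely subtract the linear term in the Taylor expansion of $g$ about $\tilde\theta_i^N$:
\begin{equation*}
\int_{I_i}\bigl(g(\theta)-g(\tilde\theta_i^N)\bigr)d\theta
= \int_{I_i}\bigl(g(\theta)-g(\tilde\theta_i^N)-g'(\tilde\theta_i^N)(\theta-\tilde\theta_i^N)\bigr)d\theta.
\end{equation*}
Writing $g(\theta)-g(\tilde\theta_i^N) = \int_0^1 g'(\tilde\theta_i^N + t(\theta-\tilde\theta_i^N))\,dt\,(\theta-\tilde\theta_i^N)$ and using Hölder continuity of $g'$, the integrand is controlled by $\|g'\|_{C^{0,\alpha}}|\theta-\tilde\theta_i^N|^{1+\alpha}\leq \|g\|_{C^{1,\alpha}}(\pi/N)^{1+\alpha}$. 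Integrating over $I_i$ and summing gives the bound $C\|g\|_{C^{1,\alpha}}N^{-(1+\alpha)}$.

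There is no real obstacle; the only subtle point is to notice that the uniform mesh \eqref{mesh} is designed precisely so that the quadrature points coincide with the midpoints of the dual intervals, which is what activates the symmetry cancellation needed for the $k=1$ case. Periodicity of $g$ ensures that the partition $\bigcup_i I_i$ covers $[0,2\pi]$ cleanly without boundary terms.
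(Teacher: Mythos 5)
Your proof is correct and follows essentially the same route as the paper: the same interval-by-interval decomposition, the same direct H\"older bound for $k=0$, and the same exploitation of the midpoint symmetry for $k=1$ (the paper expresses this via the symmetric second difference $g(\tilde\theta_i^N+s)+g(\tilde\theta_i^N-s)-2g(\tilde\theta_i^N)$, whereas you subtract the vanishing first-moment Taylor term, which is an equivalent formulation of the same cancellation).
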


\begin{proof}
	First, a standard estimate yields, setting $\theta_{N+1}^N=2\pi$,
	\begin{equation*}
		\begin{aligned}
			\left|\int_0^{2\pi}g(\theta){\rm d}\theta - \frac{2\pi}{N} \sum_{i=1}^{N} g(\tilde \theta_{i}^N) \right| & =
			\left|\sum_{i=1}^{N}\left(
			\int_{\theta_i^N}^{\theta_{i+1}^N} g(\theta) {\rm d}\theta- \frac{2\pi}{N}  g(\tilde\theta_{i}^N)\right)
			\right|
			\\
			&\leq
			\sum_{i=1}^{N}\left|
			\int_{\theta_i^N}^{\theta_{i+1}^N} \left( g(\theta) - g(\tilde\theta_{i}^N) \right){\rm d}\theta\right|
			\\
			&\leq
			\frac{(2\pi)^{1+\alpha}}{N^{\alpha}}
			\sup_{x,y\in[0,2\pi]}\frac{\left|g(x) - g(y)\right|}{|x-y|^\alpha}
			\leq
			\frac{(2\pi)^{1+\alpha}}{N^{\alpha}} \left\|g\right\|_{C^{0,\alpha}},
		\end{aligned}
	\end{equation*}
	which establishes the lemma when $k=0$.
	
	For the case $k=1$, recalling $\tilde\theta_i^N=\frac{\theta_i^N+\theta_{i+1}^{N}}{2}$, one finds that
	\begin{equation*}
		\begin{aligned}
			\left|\int_0^{2\pi}g(\theta){\rm d}\theta - \frac{2\pi}{N} \sum_{i=1}^{N} g(\tilde \theta_{i}^N) \right|
			&\leq
			\sum_{i=1}^{N}\left|
			\int_{\theta_i^N}^{\theta_{i+1}^N} \left( g(\theta) - g(\tilde\theta_{i}^N) \right){\rm d}\theta\right|
			\\
			&=\frac \pi N
			\sum_{i=1}^{N}\left|
			\int_{0}^1 \left( g\left(\tilde\theta_i^N+\frac\pi N t\right) + g\left(\tilde\theta_i^N-\frac\pi N t\right)
			- 2g(\tilde\theta_{i}^N)\right){\rm d}t\right|
			\\
			&=\frac {\pi^2}{N^2}
			\sum_{i=1}^{N}\left|
			\int_{0}^1\int_0^1 t\left( g'\left(\tilde\theta_i^N+\frac\pi N st\right) - g'\left(\tilde\theta_i^N-\frac\pi N st\right)
			\right){\rm d}t{\rm d}s\right|
			\\
			&\leq
			\frac{\pi^{2+\alpha}}{N^{1+\alpha}}
			\sup_{x,y\in[0,2\pi]}\frac{\left|g'(x) - g'(y)\right|}{|x-y|^\alpha}
			\leq
			\frac{\pi^{2+\alpha}}{N^{1+\alpha}} \left\|g\right\|_{C^{1,\alpha}},
		\end{aligned}
	\end{equation*}
	which concludes the proof of the lemma.
\end{proof}

\begin{proposition}\label{prop 32}
	Consider the uniformly distributed mesh $(\theta_{1}^N,\dots , \theta_{N}^N),(\tilde \theta_{1}^N, \dots , \tilde \theta_{N}^N)\in [0,2\pi)^N$ defined by \eqref{mesh} and, according to Corollary \ref{inverse perfect}, consider the solution $\gamma^N=(\gamma_{1}^N,\dots, \gamma_{N}^N)\in\mathbb{R}^N$ to the system \eqref{point toy} for some periodic function $f\in C^{k,\alpha}\left([0,2\pi]\right)$, where $k=0,1$, $0<\alpha\leq 1$ and $k+\alpha\geq \frac 12$, with zero mean value \eqref{flux} and some $\gamma\in\mathbb{R}$. We define the approximation
	\begin{equation}\label{f app}
		f_{\rm app}^N(\theta):= \frac 1N\sum_{j=1}^N \gamma_{j}^N \cot\left(\frac{\theta-\theta_{j}^N}2\right).
	\end{equation}

	Then, for any periodic test function $\varphi\in C^{k,\alpha}\left([0,2\pi]\right)$,
	\begin{equation*}
		\left|\int_{0}^{2\pi} (f_{\rm app}^N - f )\varphi\right|
		\leq
		\frac{C}{N^{k+\alpha}}
		\left(\left\|f\right\|_{C^{k,\alpha}}+|\gamma|
		\right)
		\left\|\varphi\right\|_{C^{k+1,\alpha}},
	\end{equation*}
	where the singular integrals are defined in the sense of Cauchy's principal value.
\end{proposition}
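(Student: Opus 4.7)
The plan is to compare both $\int_0^{2\pi} f_{\rm app}^N\varphi$ and $\int_0^{2\pi}f\varphi$ to the same midpoint sum
\begin{equation*}
S_N\ :=\ \frac{2\pi}{N}\sum_{i=1}^N f_{\rm app}^N(\tilde\theta_i^N)\,\varphi(\tilde\theta_i^N),
\end{equation*}
which is natural because, by the very definition of $\gamma^N$ in \eqref{point toy}, one has $f_{\rm app}^N(\tilde\theta_i^N)=f(\tilde\theta_i^N)$ for all $i=1,\dots,N-1$. I write $\int_0^{2\pi}(f_{\rm app}^N - f)\varphi = A + B$, where $A:=\int f_{\rm app}^N\varphi - S_N$ and $B:=S_N - \int f\varphi$, and estimate each piece separately.

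For $B$, the matching at $i\leq N-1$ makes it the sum of the midpoint Riemann error for the function $f\varphi\in C^{k,\alpha}$ (bounded by $CN^{-(k+\alpha)}\|f\|_{C^{k,\alpha}}\|\varphi\|_{C^{k,\alpha}}$ via Lemma \ref{riemann}) plus a boundary correction at $i=N$. To handle the correction, I observe that \eqref{perfect distri} gives $\sum_{i=1}^N f_{\rm app}^N(\tilde\theta_i^N)=0$, so that $f_{\rm app}^N(\tilde\theta_N^N) - f(\tilde\theta_N^N)= -\sum_{i=1}^N f(\tilde\theta_i^N)$. The zero-mean hypothesis \eqref{flux} combined with Lemma \ref{riemann} then gives $|f_{\rm app}^N(\tilde\theta_N^N) - f(\tilde\theta_N^N)|\leq C N^{1-(k+\alpha)}\|f\|_{C^{k,\alpha}}$, and the prefactor $2\pi/N$ brings the correction down to the desired order.

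The harder term is $A$. Expanding \eqref{f app} gives $A=\frac{1}{N}\sum_{j=1}^N \gamma_j^N\, E_N(j)$ with
\begin{equation*}
E_N(j) := \int_0^{2\pi}\! K_j(\theta)\varphi(\theta)\,d\theta - \frac{2\pi}{N}\sum_{i=1}^N K_j(\tilde\theta_i^N)\,\varphi(\tilde\theta_i^N),\qquad K_j(\theta):=\cot\!\Bigl(\frac{\theta-\theta_j^N}{2}\Bigr).
\end{equation*}
The pole of $K_j$ at $\theta_j^N$ forbids a direct appeal to Lemma \ref{riemann}. To regularize, I exploit the two cancellations $\int_0^{2\pi} K_j\,d\theta = 0$ (p.v., by oddness of cotangent around its pole) and $\sum_{i=1}^N K_j(\tilde\theta_i^N)=0$ (from \eqref{perfect distri}), which allow to subtract $\varphi(\theta_j^N)$ inside both terms and rewrite $E_N(j)$ as the Riemann error of
\begin{equation*}
G_j(\theta)\ :=\ K_j(\theta)\bigl[\varphi(\theta)-\varphi(\theta_j^N)\bigr]\ =\ \cos\!\Bigl(\frac{\theta-\theta_j^N}{2}\Bigr)\cdot\frac{\varphi(\theta)-\varphi(\theta_j^N)}{\sin((\theta-\theta_j^N)/2)}.
\end{equation*}
The key technical point, which I expect to be the main obstacle, is that $G_j$ is a bona fide $2\pi$-periodic $C^{k,\alpha}$ function with $\|G_j\|_{C^{k,\alpha}}\leq C\|\varphi\|_{C^{k+1,\alpha}}$ uniformly in $j$. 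Indeed, on the circle $\sin((\theta-\theta_j^N)/2)$ has a single simple zero at $\theta=\theta_j^N$, matched by the simple zero of the $C^{k+1,\alpha}$ numerator; dividing out one derivative via $\varphi(\theta)-\varphi(\theta_j^N)=(\theta-\theta_j^N)\int_0^1\varphi'(\theta_j^N + t(\theta-\theta_j^N))dt$ and a corresponding factor in $\sin$ shows that the quotient is $C^{k,\alpha}$ near the singularity, while elsewhere it is even smoother. Translation invariance makes the estimate uniform in $j$. Applying Lemma \ref{riemann} to $G_j$ then yields $|E_N(j)|\leq CN^{-(k+\alpha)}\|\varphi\|_{C^{k+1,\alpha}}$, and hence $|A|\leq CN^{-(k+\alpha)}\|\varphi\|_{C^{k+1,\alpha}}\|\gamma^N\|_{\ell^1}$.

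To close the argument, it remains to bound $\|\gamma^N\|_{\ell^1} \leq \|\gamma^N\|_{\ell^2}$. Corollary \ref{inverse perfect} applied to $v_i=f(\tilde\theta_i^N)$ yields $\|\gamma^N\|_{\ell^2} \leq \|v\|_{\ell^\infty} + |\gamma| + \sqrt{N}\,|\langle v\rangle|$. The first two terms are controlled by $\|f\|_{C^{k,\alpha}}+|\gamma|$, while \eqref{flux} together with Lemma \ref{riemann} gives $|\langle v\rangle| \leq CN^{-(k+\alpha)}\|f\|_{C^{k,\alpha}} + CN^{-1}\|f\|_{L^\infty}$, so that $\sqrt{N}\,|\langle v\rangle|\leq CN^{1/2-(k+\alpha)}\|f\|_{C^{k,\alpha}}+CN^{-1/2}\|f\|_{L^\infty}$, which is $O(1)$ precisely under the standing hypothesis $k+\alpha\geq\tfrac12$. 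Combining the bounds on $A$ and $B$ delivers the announced inequality.
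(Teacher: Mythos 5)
Your proof is correct and follows essentially the same route as the paper's: the splitting into $A=D_1$ and $B=D_2+D_3+D_4$, the regularization of the cotangent kernel by subtracting $\varphi(\theta_j^N)$ using both cancellations in \eqref{perfect distri}, the identification of the boundary term at $i=N$ with a Riemann error of $f$ via \eqref{flux}, and the control of $\|\gamma^N\|_{\ell^1}$ through Corollary \ref{inverse perfect} with the hypothesis $k+\alpha\geq\tfrac12$ absorbing the $\sqrt N\left|\langle v\rangle\right|$ term all match the paper's argument. No gaps.
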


\begin{proof}
	Let $\varphi \in C^{\infty}\left([0,2\pi]\right)$ be a periodic test function. Then, we decompose
	\begin{equation*}
		\begin{aligned}
			\int_{0}^{2\pi} (f_{\rm app}^N - f )\varphi
			=& \left(
			\int_{0}^{2\pi} f_{\rm app}^N \varphi - \frac{2\pi}{N} \sum_{i=1}^{N} f_{\rm app}^N(\tilde\theta_{i}^N) \varphi(\tilde\theta_{i}^N)
			\right)\\
			&- \left(
			\int_{0}^{2\pi} f \varphi- \frac{2\pi}{N} \sum_{i=1}^{N} f(\tilde\theta_{i}^N) \varphi(\tilde\theta_{i}^N)
			\right)\\
			&+
			\frac{2\pi}{N} \sum_{i=1}^{N-1} \left( f_{\rm app}^N(\tilde\theta_{i}^N)- f(\tilde\theta_{i}^N) \right) \varphi(\tilde\theta_{i}^N)
			\\
			&+
			\frac{2\pi}{N} \left( f_{\rm app}^N(\tilde\theta_{N}^N)- f(\tilde\theta_{N}^N) \right) \varphi(\tilde\theta_{N}^N)
			\\
			=&: D_1 + D_2 + D_3 + D_4.
		\end{aligned}
	\end{equation*}
	It is readily seen that $D_3$ is null, for $f_{\rm app}^N(\tilde\theta_{i}^N) = f(\tilde\theta_{i}^N)$, for all $i=1,\ldots,N-1$, by construction (see \eqref{point toy}).

	Next, note that $D_2$ is the error of approximation of the integral $\int_{0}^{2\pi} f \varphi$ by its Riemann sum. Therefore, a direct application of Lemma \ref{riemann} yields
	\begin{equation}\label{d2}
		|D_2|\leq \frac{C}{N^{k+\alpha}} \left\|f\varphi\right\|_{C^{k,\alpha}}
		\leq\frac{C}{N^{k+\alpha}} \left\|f\right\|_{C^{k,\alpha}}\left\|\varphi\right\|_{C^{k,\alpha}}.
	\end{equation}

	As for the term $D_1$, it is first rewritten, exploiting the symmetry of the cotangent function (see \eqref{perfect distri}), as
	\begin{equation*}
		\begin{aligned}
			D_1  =& \int_{0}^{2\pi} f_{\rm app}^N \varphi - \frac{2\pi}{N} \sum_{i=1}^{N} f_{\rm app}^N(\tilde\theta_{i}^N) \varphi(\tilde\theta_{i}^N)
			\\
			 =& \frac 1N\sum_{j=1}^N \gamma_{j}^N
			\int_{0}^{2\pi} \cot\left(\frac{\theta-\theta_{j}^N}2\right)
			\varphi (\theta)\d \theta
			- \frac{2\pi}{N^2} \sum_{i,j=1}^{N} \gamma_{j}^N
			\cot\left(\frac{\tilde\theta_i^N-\theta_{j}^N}2\right)
			\varphi(\tilde\theta_{i}^N)
			\\
			=& \frac 1N\sum_{j=1}^N \gamma_{j}^N
			\int_{0}^{2\pi} \cot\left(\frac{\theta-\theta_{j}^N}2\right)
			\left(\varphi (\theta) - \varphi(\theta_j^N)\right)\d \theta
			\\
			& - \frac{2\pi}{N^2} \sum_{i,j=1}^{N} \gamma_{j}^N
			\cot\left(\frac{\tilde\theta_i^N-\theta_{j}^N}2\right)
			\left(\varphi(\tilde\theta_{i}^N)-\varphi(\theta_{j}^N)\right)
			\\
			 =&\int_{0}^{2\pi} F(\theta){\rm d} \theta
			- \frac{2\pi}{N} \sum_{i=1}^N
			F(\tilde\theta_i^N),
		\end{aligned}
	\end{equation*}
	where
	\begin{equation*}
		F(\theta)=
		\frac 1N\sum_{j=1}^N \gamma_j^N \cot\left(\frac{\theta-\theta_{j}^N}2\right)
		\left(\varphi (\theta) - \varphi(\theta_j^N)\right).
	\end{equation*}
	Note that the integrand $\theta\mapsto  \cot\left(\frac{\theta-\theta_{j}^N}2\right) (\varphi(\theta)- \varphi(\theta_{j}^N))$ above is now regular, thus assuring that the Riemann sums converge. It therefore follows from Lemma \ref{riemann} that
	\begin{equation*}
		|D_1|\leq \frac{C}{N^{k+\alpha}}\left\|F\right\|_{C^{k,\alpha}}
		\leq \frac{C}{N^{k+\alpha}}\left\|\gamma^N\right\|_{\ell^1}\left\|x\cot x\right\|_{C^{k,\alpha}\left(\left[0,\frac\pi2\right]\right)}\left\|\varphi'\right\|_{C^{k,\alpha}}
		\leq \frac{C}{N^{k+\alpha}}\left\|\gamma^N\right\|_{\ell^1}\left\|\varphi\right\|_{C^{k+1,\alpha}}.
	\end{equation*}
	Then, further utilizing estimate \eqref{est l1}, Lemma \ref{riemann}, that $k+\alpha\geq\frac 12$ and the fact that $f$ has zero mean value \eqref{flux}, we infer
	\begin{equation}\label{d1}
		\begin{aligned}
			|D_1|
			& \leq \frac{C}{N^{k+\alpha}}
			\left(\left\|f\right\|_{L^\infty}+|\gamma|+{\sqrt N}\left|\frac 1{N-1}\sum_{i=1}^{N-1}f(\tilde\theta_i^N)\right|\right)
			\left\|\varphi\right\|_{C^{k+1,\alpha}}
			\\
			& \leq \frac{C}{N^{k+\alpha}}
			\left(\left\|f\right\|_{L^\infty}+|\gamma|
			+{\sqrt N}\left|\int_0^{2\pi}f(\theta){\rm d}\theta - \frac {2\pi}{N}\sum_{i=1}^{N}f(\tilde\theta_i^N)\right|\right)
			\left\|\varphi\right\|_{C^{k+1,\alpha}}
			\\
			& \leq \frac{C}{N^{k+\alpha}}
			\left(\left\|f\right\|_{C^{k,\alpha}}+|\gamma|
			\right)
			\left\|\varphi\right\|_{C^{k+1,\alpha}}.
		\end{aligned}
	\end{equation}

	Finally, regarding $D_4$, recalling that, by \eqref{point toy} and \eqref{perfect distri},
	\begin{equation*}
\sum_{i=1}^{N-1}f(\tilde \theta_{i}^N) = \frac1N\sum_{j=1}^N \gamma_{j}^N \sum_{i=1}^{N-1} \cot\left(\frac{\tilde\theta_{i}^N-\theta_{j}^N}2\right)
		= -\frac1N\sum_{j=1}^N \gamma_{j}^N \cot\left(\frac{\tilde\theta_{N}^N-\theta_{j}^N}2\right)
		=-f_{\rm app}^N(\tilde\theta_N^N),
	\end{equation*}
	we find
	\begin{equation*}
		\begin{aligned}
			D_4 & =\frac{2\pi}{N} \left( f_{\rm app}^N(\tilde\theta_{N}^N)- f(\tilde\theta_{N}^N) \right) \varphi(\tilde\theta_{N}^N)
			=-\frac{2\pi}{N} \sum_{i=1}^N f(\tilde \theta_{i}^N)\varphi(\tilde\theta_{N}^N) \\
			& =\left(\int_0^{2\pi}f(\theta){\rm d}\theta - \frac {2\pi}{N}\sum_{i=1}^{N}f(\tilde\theta_i^N)\right)\varphi(\tilde\theta_{N}^N).
		\end{aligned}
	\end{equation*}
	Hence, utilizing Lemma \eqref{riemann} again,
	\begin{equation}\label{d4}
		\left|D_4\right|
		\leq \frac{C}{N^{k+\alpha}}\|f \|_{C^{k,\alpha}}\left\|\varphi\right\|_{L^\infty}.
	\end{equation}

	On the whole, since $D_3=0$, combining \eqref{d2}, \eqref{d1} and \eqref{d4}, we deduce that
	\begin{equation*}
		\left|\int_{0}^{2\pi} (f_{\rm app}^N - f )\varphi\right|
		\leq
		\frac{C}{N^{k+\alpha}}
		\left(\left\|f\right\|_{C^{k,\alpha}}+|\gamma|
		\right)
		\left\|\varphi\right\|_{C^{k+1,\alpha}},
	\end{equation*}
	which concludes the proof of the proposition.
\end{proof}

\section{Proof of Theorem \ref{main theo}}

We proceed now to the demonstration of our main result --~Theorem \ref{main theo}~-- on the approximation of the boundary of an exterior domain by point vortices.

First, for given $\omega\in C_c^{0,\alpha}$ and $\gamma\in\mathbb{R}$, recall that the full plane flow $u_{P}\in C^1\left(\overline \Omega\right)$ is obtained from \eqref{uP} and that the $2\pi$-periodic function $f\in C^\infty\left([0,2\pi]\right)$, which has zero mean value \eqref{flux}, is defined by \eqref{f}. Therefore, with this given $f$, according to Corollary \ref{inverse perfect}, we find a unique solution $\gamma^N\in\mathbb{R}^N$ of \eqref{point toy}.

Next, the approximate flow $u_{\rm app}^N$ is introduced by \eqref{approx}, which verifies by \eqref{cot}:
\[
u_{\rm app}^N(x)\cdot n(x) = - \frac1{4\pi} f_{\rm app}^N(\theta),
\]
where $x=(\cos\theta,\sin\theta)\in\partial B(0,1)$ and $f_{\rm app}^N$ is defined by \eqref{f app}.
Utilizing identity \eqref{vortex identity} to rewrite the discrete Biot-Savart kernel of $u_{\rm app}^N$, we find that
\begin{align*}
	u_{\rm app}^N(x)& = \frac1{2\pi} \sum_{j=1}^N \frac{\gamma_{j}^N}N 
	\left(
		\frac{1}{2\pi}\int_{\partial B(0,1)}
		\frac{x-z}{|x-z|^2}
		\cot\left(\frac{\tilde\theta_{j}^N-\theta}{2}\right) dz
		+\frac{x^\perp}{|x|^2}
	\right)\\
	&=\frac{-1}{4\pi^2}\int_{\partial B(0,1)} \frac{x-z}{|x-z|^2}f_{\rm app}^N(z)dz + \frac \gamma{2\pi}\frac{x^\perp}{|x|^2}, \quad\text{on }\Omega.
\end{align*}

Furthermore, recall that, according to \eqref{boundary sheet circle 2}, the remainder flow $u_R$ can be expressed as
\begin{equation*}
	u_R(x) = -\frac 1{4\pi^2}\int_{\partial B(0,1)} \frac{x-y}{|x-y|^2} f(y)dy
	+ \frac \gamma{2\pi} \frac{x^\perp}{|x|^2}, \quad\text{on }\Omega,
\end{equation*}
whereby
\begin{equation*}
	\left(u_R-u_{\rm app}^N\right)(x) = \frac 1{4\pi^2}\int_{\partial B(0,1)} \frac{x-y}{|x-y|^2} \left(f^N_{\rm app}-f\right)(y)dy, \quad\text{on }\Omega.
\end{equation*}

Therefore, in view of Proposition \ref{prop 32}, we deduce that, for any fixed $x\in\Omega$,
\begin{equation*}
	\left|\left(u_R-u_{\rm app}^N\right)(x)\right|\leq \frac{C}{N^2}\left\|\frac{x-y}{|x-y|^2}\right\|_{C_y^3}
	\leq \frac{C}{N^2}\sup_{y\in\partial B(0,1)}\left(\frac{1}{|x-y|}+\frac{1}{|x-y|^4}\right),
\end{equation*}
where the constant $C>0$ only depends on $\omega$ and $\gamma$. It follows that, for any closed set $K\subset\Omega$,
\begin{equation*}
	\| u_{R} - u_{\rm app}^N \|_{L^\infty(K)} \leq \frac{C}{N^2},
\end{equation*}
which concludes the proof of the theorem. \qed

\bigskip

\noindent
 {\bf Acknowledgements.} The authors are partially supported by the project \emph{Instabilities in Hydrodynamics} funded by the Paris city hall (program \emph{Emergences}) and the \emph{Fondation Sciences Math\'ematiques de Paris}. C.L. is partially supported by the Agence Nationale de la Recherche, Project DYFICOLTI, grant ANR-13-BS01-0003-01.

\def\cprime{$'$}

\end{document}